\newtheorem{thm}{Theorem}[section]
\newtheorem{lem}[thm]{Lemma}
\newtheorem{prop}[thm]{Proposition}
\newtheorem{cor}[thm]{Corollary}
\theoremstyle{definition}
\newtheorem{dfn}[thm]{Definition}
\newtheorem{rem}[thm]{Remark}
\newtheorem{conv}[thm]{Convention}
\newtheorem{ex}[thm]{Example}
\theoremstyle{remark}
\newtheorem*{claim*}{Claim}
\numberwithin{equation}{thm}
\def\A{\mathrm{A}}
\def\add{\operatorname{\mathsf{add}}}
\def\C{\mathcal{C}}
\def\CC{\mathbb{C}}
\def\ch{\operatorname{char}}
\def\cm{\mathsf{CM}}
\def\cone{\operatorname{cone}}
\def\D{\mathrm{D}}
\def\d{\operatorname{\mathscr{D}}}
\def\db{\operatorname{\mathsf{D^b}}}
\def\depth{\operatorname{depth}}
\def\ds{\operatorname{\mathsf{D_{sg}}}}
\def\E{\mathrm{E}}
\def\ge{\geqslant}
\def\Hom{\operatorname{Hom}}
\def\height{\operatorname{ht}}
\def\ind{\operatorname{\mathsf{ind}}}
\def\ker{\operatorname{Ker}}
\def\lcm{\operatorname{\underline{\mathsf{CM}}}}
\def\le{\leqslant}
\def\m{\mathfrak{m}}
\def\Min{\operatorname{Min}}
\def\mod{\operatorname{\mathsf{mod}}}
\def\n{\mathfrak{n}}
\def\nf{\operatorname{NF}}
\def\P{\mathbb{P}}
\def\p{\mathfrak{p}}
\def\q{\mathfrak{q}}
\def\r{\mathfrak{r}}
\def\reg{\operatorname{Reg}}
\def\sing{\operatorname{Sing}}
\def\spec{\operatorname{Spec}}
\def\syz{\Omega}
\def\T{\mathcal{T}}
\def\thick{\operatorname{\mathsf{thick}}}
\def\U{\mathcal{U}}
\def\V{\mathrm{V}}
\def\X{\mathcal{X}}
\def\Y{\mathcal{Y}}
\def\Z{\mathbb{Z}}
\begin{document}
\title{On a Verdier quotient of a derived category of a local ring}
\author{Yuki Mifune}
\address[Mifune]{Graduate School of Mathematics, Nagoya University, Furocho, Chikusaku, Nagoya 464-8602, Japan}
\email{yuki.mifune.c9@math.nagoya-u.ac.jp}
\author{Ryo Takahashi}
\address[Takahashi]{Graduate School of Mathematics, Nagoya University, Furocho, Chikusaku, Nagoya 464-8602, Japan}
\email{takahashi@math.nagoya-u.ac.jp}
\urladdr{https://www.math.nagoya-u.ac.jp/~takahashi/}
\thanks{2020 {\em Mathematics Subject Classification.} 13D09, 18G80}
\thanks{{\em Key words and phrases.} derived category, Verdier quotient, singularity category, additive generator, punctured spectrum, maximal Cohen--Macaulay module, finite/countable CM type, hypersurface, simple singularity}
\thanks{Takahashi was partly supported by JSPS Grant-in-Aid for Scientific Research 23K03070}
\begin{abstract}
Let $R$ be a commutative noetherian local ring with residue field $k$.
Denote by $\db(R)$ the bounded derived category of finitely generated $R$-modules.
In this paper, we study the structure of the Verdier quotient $\db(R)/\thick(R\oplus k)$.
We give necessary and sufficient conditions for it to admit an additive generator.
\end{abstract}
\maketitle
\section{Introduction}

Let $R$ be a commutative noetherian local ring with residue field $k$.
Denote by $\mod R$ the category of finitely generated $R$-modules, by $\db(R)$ the bounded derived category of $\mod R$, and by $\ds(R)=\db(R)/\thick R$ the singularity category of $R$.
In the present paper, we investigate the structure of the Verdier quotient
$$
\d(R)=\db(R)/\thick(R\oplus k)=\ds(R)/\thick k,
$$
which is nothing other than the Verdier quotient of $\db(R)$ by the complexes locally perfect on the punctured spectrum.
Takahashi \cite{dlr} gives a complete classification of the thick subcategories of the triangulated category $\d(R)$ under some mild assumtions.
For instance, if $R$ is locally a hypersurface on the punctured spectrum, then the thick subcategories of the triangulated category $\d(R)$ are parametrized by nonempty specialization-closed subsets of the singular locus of $R$.
Therefore, the category $\d(R)$ is thought of as relatively small.
From this point of view, it is reasonable to ask when $\d(R)$ is smallest as an additive category, namely, when it has an additive generator.
An answer is when $R$ is a Cohen--Macaulay ring of finite CM type, or more generally, when $R$ is an isolated singularity; in this case $\d(R)=0$.
The following theorem provides another answer.

\begin{thm}[Buchweitz--Greuel--Schreyer, Araya--Iima--Takahashi, Kobayashi--Lyle--Takahashi]\label{17}
Let $R$ be a complete equicharacteristic local ring with uncountable algebraically closed residue field of characteristic different from two.
Suppose that the ring $R$ is a hypersurface of countable CM type.
Then $R$ locally has finite CM type on the punctured spectrum, and the triangulated category $\d(R)$ possesses an additive generator.
\end{thm}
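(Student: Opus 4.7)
My plan is to combine the three cited inputs. The first step is to invoke the Buchweitz--Greuel--Schreyer classification of hypersurface singularities of countable CM type: under the stated hypotheses on $R$, one concludes that $R\cong k[[x_0,\ldots,x_d]]/(f)$ with $f$ coming from an explicit short list of normal forms of type $A_\infty$ and $D_\infty$. In each case the singular locus of $\spec R$ is a one-dimensional smooth subscheme that can be read off directly from the Jacobian ideal of $f$.

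Given this classification, to prove that $R$ locally has finite CM type on the punctured spectrum, I would argue case-by-case on the normal form. For a non-maximal prime $\mathfrak{p}$, either $\mathfrak{p}$ lies outside the singular locus, in which case $R_\mathfrak{p}$ is regular, or $\mathfrak{p}$ is the generic point of a component of the singular curve. In the latter case the localization $R_\mathfrak{p}$ is a zero-dimensional hypersurface over $\kappa(\mathfrak{p})$, hence of the form $K[[t]]/(t^n)$ for some $n$, and such an Artinian uniserial ring is trivially of finite CM type.

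For the existence of an additive generator of $\d(R)$, the plan is to exploit the equivalence $\ds(R)\simeq\lcm(R)$ available because $R$ is a hypersurface, together with the classification of indecomposable MCM modules over $A_\infty$ and $D_\infty$ singularities (again essentially due to Buchweitz--Greuel--Schreyer). These indecomposables come in finitely many countable families, parameterised by matrix factorisations of simple shape and by Eisenbud's $\Omega$-periodicity. The work of Araya--Iima--Takahashi and Kobayashi--Lyle--Takahashi supplies, for each such family, short exact sequences in which successive members differ only by a module of finite length on the punctured spectrum. Since any such module lies in $\thick k$, every indecomposable in the family becomes isomorphic in $\d(R)$ to a fixed representative. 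The direct sum of one representative from each family is then an additive generator of $\d(R)$.

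The principal obstacle is this last collapse statement. Making it rigorous requires the explicit matrix-factorisation descriptions of the indecomposable MCM modules over the $A_\infty$ and $D_\infty$ hypersurfaces, detailed knowledge of the $\Hom$-spaces between them, and the identification of $\thick k$ inside $\ds(R)$ with the subcategory of objects whose cohomology has finite length, i.e., is supported only at $\m$. Once these are in hand, producing the needed extensions and checking that they become split in $\d(R)$ is a bookkeeping exercise, but the algebraic structure of the MCM modules in the $D_\infty$ case is intricate enough that the verification is not entirely routine.
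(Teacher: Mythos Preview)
Your argument for locally finite CM type on the punctured spectrum contains a genuine error. You claim that for a nonmaximal prime $\p$ in the singular locus, $R_\p$ is a zero-dimensional hypersurface. This is false whenever $d\ge 2$: since $\dim\sing R=1$ and $R$ is Cohen--Macaulay, such a $\p$ satisfies $\dim R/\p=1$ and hence $\height\p=d-1$, so $R_\p$ has dimension $d-1$. What is actually true for the $(\A_\infty)$ and $(\D_\infty)$ normal forms is that $R_\p$ is an $(\A_1)$-singularity of dimension $d-1$ (once the variable $x$ outside the Jacobian is inverted, the defining quadric becomes nondegenerate), and this does have finite CM type---but not for the reason you state.

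The paper bypasses any such case analysis by routing both conclusions through the single notion of finite CM$_+$ type. After the Buchweitz--Greuel--Schreyer classification, Araya--Iima--Takahashi prove that $R$ has finite CM$_+$ type: only finitely many isomorphism classes of indecomposable maximal Cohen--Macaulay $R$-modules fail to be locally free on the punctured spectrum. The additive generator of $\d(R)$ then follows immediately from Lemma~\ref{6}: every object of $\d(R)$ is a shift of a maximal Cohen--Macaulay module, those locally free on the punctured spectrum \emph{vanish} in $\d(R)$ (rather than merely becoming isomorphic to one another via short exact sequences, as in your sketch), and $2$-periodicity of syzygies over a hypersurface controls the shifts. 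For the other conclusion, the paper invokes the general result of Kobayashi--Lyle--Takahashi that any Cohen--Macaulay local ring with a canonical module and of finite CM$_+$ type is locally of finite CM type on the punctured spectrum, avoiding explicit computation of $R_\p$ altogether.
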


Let us explain this theorem.
Let $R$ be as in the theorem; assume $R$ has infinite CM type.
Buchweitz, Greuel and Schreyer \cite{BGS} prove that $R$ is either an $(\A_\infty)$-singularity or a $(\D_\infty)$-singularity, and classify the isomorphism classes of indecomposable maximal Cohen--Macaulay $R$-modules.
Using this classification, Araya, Iima and Takahashi \cite{hsccm} prove that $R$ has {\em finite CM$_+$ type} in the sense of Kobayashi, Lyle and Takahashi \cite{plus}, i.e., there exist only finitely many isomorphism classes of indecomposable maximal Cohen--Macaulay $R$-modules that are not locally free on the punctured spectrum.
Then $\d(R)$ possesses an additive generator (by Lemma \ref{6}).
On the other hand, Kobayashi, Lyle and Takahashi \cite{plus} prove in general that if a Cohen--Macaulay local ring $R$ with a canonical module has finite CM$_+$ type, then it locally has finite CM type on the punctured spectrum.

The main result of the present paper is the following theorem, which describes the relationship of those two conditions which appear in the above theorem: the condition that the ring $R$ locally has finite CM type on the punctured spectrum, and the condition that the category $\d(R)$ possesses an additive generator.

\begin{thm}[Theorems \ref{7} and \ref{4}]\label{1}
Let $R$ be a local ring whose regular locus is Zariski-open.
Suppose that $R$ is locally a Cohen--Macaulay ring of finite CM type on the punctured spectrum.
Then $\d(R)$ possesses an additive generator.
The converse holds true as well, if $R$ is locally Gorenstein on the punctured spectrum.
\end{thm}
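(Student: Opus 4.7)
My plan is to handle the two implications separately: the forward direction by constructing an explicit additive generator of $\d(R)$, and the converse by localizing at each prime of the punctured spectrum and invoking the equivalence between additive generation of $\lcm(R_\p)$ and finite CM type of $R_\p$.

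\textbf{Forward direction.} Zariski-openness of the regular locus makes $\sing(R)$ a closed subset of $\spec R$, so $\sing(R)\setminus\{\m\}$ is a noetherian topological space. By noetherian induction on closed subsets thereof, I would construct a finite family $M_1,\dots,M_n$ of finitely generated $R$-modules with the property that for every $\p\in\sing(R)\setminus\{\m\}$ the localizations $(M_i)_\p$ realize every indecomposable MCM $R_\p$-module up to isomorphism, with each $M_i$ being MCM on an open neighborhood of the prime where it was introduced. At a minimal point $\p$ of the closed set under consideration, finite CM type of $R_\p$ supplies finitely many indecomposable MCMs; each lifts to a finitely generated $R$-module which can be arranged to be MCM on a Zariski neighborhood of $\p$, and removing that open stratum from $\sing(R)\setminus\{\m\}$ leaves a strictly smaller closed set where the inductive hypothesis applies. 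Setting $G:=R\oplus k\oplus M_1\oplus\dots\oplus M_n$, I would then verify that $G$ is an additive generator of $\d(R)$: for any $X\in\db(R)$, a sufficiently high syzygy $N=\syz^d X$ with $d=\dim R$ is MCM on the punctured spectrum, and a further noetherian induction on $\mathrm{Supp}(N)\cap(\sing(R)\setminus\{\m\})$---peeling off generic-point contributions using the prime-by-prime relations $N_\p\in\add((M_1)_\p\oplus\dots\oplus(M_n)_\p)$ and the fact that $\thick(R\oplus k)$ consists exactly of the complexes locally perfect on the punctured spectrum---forces $N$ to be an $\add(G)$-summand in $\d(R)$.

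\textbf{Converse direction.} Assume $\d(R)$ has an additive generator $G$ and $R$ is locally Gorenstein on the punctured spectrum. Fix $\p\ne\m$. Localization $\db(R)\to\db(R_\p)\to\ds(R_\p)$ sends $R$ to the perfect complex $R_\p$ and $k$ to $k_\p=0$, so it descends to an exact functor $L_\p:\d(R)\to\ds(R_\p)$. Every object of $\ds(R_\p)$ is represented by a bounded complex of finitely generated $R_\p$-modules and hence arises as the localization of some $X\in\db(R)$, so $L_\p$ is essentially surjective; consequently $L_\p G$ is an additive generator of $\ds(R_\p)$. Since $R_\p$ is Gorenstein, $\ds(R_\p)=\lcm(R_\p)$, and having an additive generator of $\lcm(R_\p)$ is equivalent to $R_\p$ having finite CM type; Cohen--Macaulayness of $R_\p$ is automatic from Gorensteinness.

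\textbf{Main obstacle.} The hardest step is in the forward direction. One must show that the $M_i$ produced by the noetherian induction really cover every indecomposable MCM $R_\p$-module for \emph{all} $\p\in\sing(R)\setminus\{\m\}$, not merely for the inducting generic points, and then upgrade a prime-by-prime $\add$-summand statement in the various $\lcm(R_\p)$ to a single $\add$-summand statement in $\d(R)$; this patching genuinely uses the identification of $\thick(R\oplus k)$ with locally-perfect-on-the-punctured-spectrum complexes and a secondary induction on support. The converse is comparatively routine once essential surjectivity of $L_\p$ is established.
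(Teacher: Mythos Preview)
Your converse direction is correct and matches the paper's proof of Theorem~\ref{7} almost verbatim.

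For the forward direction, however, you have misidentified the main obstacle and left the actual one unaddressed.

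\textbf{The obstacle you name is not there.} A Cohen--Macaulay local ring of finite CM type is an isolated singularity; applying this to each $R_\p$ with $\p\ne\m$ shows that $\sing R$ has dimension at most one (this is Lemma~\ref{10}). Since $R$ is J-1, $\sing R\setminus\{\m\}=\{\p_1,\dots,\p_n\}$ is a \emph{finite antichain}: there are no specializations among these primes. Your noetherian induction therefore collapses to a single step, and your worry that the lifts $M_i$ must ``cover every indecomposable MCM $R_\p$-module for all $\p$, not merely for the inducting generic points'' evaporates---every point of $\sing R\setminus\{\m\}$ \emph{is} a generic point. The paper simply lifts an additive generator of each $\cm(R_{\p_i})$ and takes the direct sum.

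\textbf{The obstacle you miss is closure under positive shifts.} Your patching argument (which corresponds to the paper's use of \cite[Lemma~3.7(2)]{dlr}) shows at best that every $R$-module $N$ which is locally MCM on the punctured spectrum lies in $\add_{\d(R)}G$. Combined with Lemma~\ref{6}(1)(2), this gives $X\cong(\syz^dM)[n+d]\in\add_{\d(R)}G[n+d]$ for arbitrary $X\in\d(R)$. When $n+d\le 0$ you are done since $G[-p]\cong\syz^pG$ is again locally MCM and hence in $\add_{\d(R)}G$; but for $n+d>0$ you must show $G[1]\in\add_{\d(R)}G$, and nothing in your outline does this. The paper handles it via Lemma~\ref{13}: the chain $\add G_{\p_i}\supseteq\add\syz^dG_{\p_i}\supseteq\add\syz^{2d}G_{\p_i}\supseteq\cdots$ stabilizes (because $\add G_{\p_i}$ has only finitely many indecomposables), and after replacing $G$ by a high enough syzygy one gets $\add_{\d(R)}G=\add_{\d(R)}\syz^dG=\add_{\d(R)}G[-d]$, hence $G[d]\in\add_{\d(R)}G$; a short further argument then gives $G[1]\in\add_{\d(R)}G$. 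This stabilization step is the genuine crux of the forward direction, and it is absent from your proposal.
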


As to the last assertion of the above theorem, the assumption of being locally Gorenstein on the punctured spectrum cannot be removed; we shall construct in Example \ref{18} a concrete example which shows it.

Applying the above theorem, we obtain some sufficient conditions for $\d(R)$ to admit an additive generator.

\begin{cor}[Corollaries \ref{19}, \ref{14'} and \ref{14}]\label{2}
\begin{enumerate}[\rm(1)]
\item
Let $R$ be a local ring with Zariski-open regular locus.
If $R$ is locally a simple singularity on the punctured spectrum, then $\d(R)$ admits an additive generator.
\item
Let $R$ be a local ring of Krull dimension one.
If $R$ is locally a hypersurface on the punctured spectrum (e.g., if $R$ is itself a hypersurface), then the triangulated category $\d(R)$ possesses an additive generator.
\item
Let $R$ be a Cohen--Macaulay local ring of Krull dimension one.
Suppose that $R$ has uncountable residue field, and admits a canonical module.
If $R$ is of countable CM type, then $\d(R)$ has an additive generator.
\end{enumerate}
\end{cor}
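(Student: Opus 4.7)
The plan for all three parts is to reduce to Theorem \ref{1}; thus it suffices to verify the two hypotheses (a) the regular locus of $R$ is Zariski-open, and (b) for every non-maximal prime $\p$ of $R$, the local ring $R_\p$ is Cohen--Macaulay of finite CM type.

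For (1), hypothesis (a) is part of the assumption. For (b), I would invoke the classical fact that every simple hypersurface singularity (the ADE series) is Cohen--Macaulay of finite CM type, due to Herzog, Kn\"{o}rrer, and Buchweitz--Greuel--Schreyer; hence each $R_\p$ with $\p\neq\m$ satisfies both conditions automatically, and Theorem \ref{1} applies.

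For (2) and (3), $R$ is one-dimensional, so $\spec R \setminus \{\m\}$ equals the finite set $\Min R$ of minimal primes, endowed with the discrete subspace topology. A brief check (for each $\p_i\in \Min R$, the closed set $V(\bigcap_{\p_j\notin\reg(R)}\p_j)\cup\{\m\}$ realizes the singular locus) shows that $\reg(R)$ is always open in $\spec R$, so (a) is free. For (2), each $R_\p$ with $\p\in\Min R$ is a zero-dimensional hypersurface, necessarily of the form $S/(\pi^n)$ for some discrete valuation ring $(S,\pi)$ and integer $n\ge 1$; this ring is Artinian, hence Cohen--Macaulay, with indecomposable modules precisely $S/(\pi^i)$ for $1\le i\le n$, so it has finite CM type. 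Theorem \ref{1} then yields (2).

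The substantive work lies in (3), where (b) must be extracted from the countable CM type hypothesis. The plan is first to establish, using the uncountable residue field in an essential way, that $R$ has finite CM$_+$ type in the sense of Kobayashi--Lyle--Takahashi \cite{plus}: only finitely many isomorphism classes of indecomposable maximal Cohen--Macaulay $R$-modules fail to be locally free on the punctured spectrum. Without uncountability one could not rule out continuous families of such indecomposables. Once finite CM$_+$ type is in hand, the Kobayashi--Lyle--Takahashi result recalled in the introduction---which requires the canonical module---produces that $R$ locally has finite CM type on the punctured spectrum, verifying (b); Theorem \ref{1} then delivers (3). The main obstacle is precisely the implication \emph{countable CM type implies finite CM$_+$ type} in dimension one, which is the deepest ingredient of this part.
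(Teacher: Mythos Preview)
Your treatment of parts (1) and (2) is correct and matches the paper's approach: both reduce to Theorem~\ref{4} after checking that simple singularities (respectively, artinian hypersurfaces) are Cohen--Macaulay of finite CM type, and in dimension one the J-1 condition is automatic (Lemma~\ref{16}).

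Your plan for part (3), however, has a genuine gap. You propose to pass through finite CM$_+$ type: first deduce from countable CM type (plus uncountable residue field) that $R$ has finite CM$_+$ type, and then invoke the Kobayashi--Lyle--Takahashi result from \cite{plus}. But the implication ``countable CM type $\Rightarrow$ finite CM$_+$ type'' is not available; indeed, the equivalence of countable CM type and finite CM$_+$ type is precisely the content of \cite[Conjecture~1.1]{plus}, which remains open even for complete Gorenstein local rings. You acknowledge this step as ``the deepest ingredient,'' but you offer no mechanism for proving it, and none is known.

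The paper avoids this entirely by a much more direct route. For each minimal prime $\p$: (i) countable CM type localizes to $R_\p$ by \cite[Theorem~14.5]{LW} (this is where the canonical module is used); (ii) the residue field $\kappa(\p)$ is uncountable since $k$ is (Lemma~\ref{15}(1)); (iii) an artinian local ring with uncountable residue field and countable CM type must be a hypersurface, hence of finite CM type (Lemma~\ref{15}(2), proved by an elementary counting argument producing uncountably many non-isomorphic cyclic modules when the embedding dimension exceeds one). This verifies hypothesis (b) directly at each $R_\p$, and Theorem~\ref{4} finishes. No detour through finite CM$_+$ type of $R$ itself is needed.
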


Here, a {\em simple singularity} means a commutative noetherian local ring $R$ with residue field of characteristic zero such that the complete tensor product of the completion of $R$ with the algebraic closure of $k$ is either an $(\A_n)$-singularity ($n\ge1$) or a $(\D_n)$-singularity ($n\ge4$) or an $(\E_n)$-singularity ($n=6,7,8$).
As for the second assertion of the corollary, we can actually construct an explicit additive generator of $\d(R)$; see Remark \ref{21}.

This paper is organized as follows.
In Section 2, we study additive closures of syzygies to show Lemma \ref{13}.
In Section 3, we explore the J-1 property of a local ring, and the property of being locally of finite CM type on the punctured spectrum to prove Lemma \ref{10}.
We also investigate basic properties of $\d(R)$.
In Section 4, we consider when $\d(R)$ has an additive generator.
We prove Theorems \ref{7} and \ref{4} by using results in the previous sections including Lemmas \ref{13} and \ref{10}.
In Section 5, applying Theorem \ref{4} to the cases of locally a simple singularity on the punctured spectrum and of dimension one, we obtain Corollaries \ref{19}, \ref{14'} and \ref{14}.

\section{Additive closures, syzygies and indecomposables}

In this section, we investigate several fundamental properties of additive closures, syzygies, and indecomposable modules.
We then state a certain property of additive closures of syzygies, which is necessary in a later section.
We begin with our convention, which is valid throughout the paper.

\begin{conv}
We assume that all rings are commutative and noetherian, all modules are finitely generated, and all subcategories are nonempty and strictly full.
Unless otherwise specified, $R$ is a local ring of (Krull) dimension $d$ with maximal ideal $\m$ and residue field $k$.
We denote by $\mod R$ the category of (finitely generated) $R$-modules.
For a prime ideal $\p$ of $R$, we denote by $\kappa(\p)$ the residue field of the local ring $R_\p$, that is to say, $\kappa(\p)=R_\p/\p R_\p$.
For an $R$-module $M$, we denote by $[M]$ the isomorphism class of $M$.
Subscripts and superscripts may be omitted if they are clear from the context.
\end{conv}

We state the definition of additive closures and some of their basic properties.

\begin{dfn}
Let $\C$ be an additive category.
For a subcategory $\X$ of $\C$, we denote by $\add_\C\X$ the {\em additive closure} of $\X$ in $\C$.
This is defined to be the smallest subcategory of $\C$ which contains $\X$ and is closed under finite direct sums and direct summands.
When $\X$ consists of a single object $M$, we set $\add_\C M=\add_\C\X$.
An {\em additive generator} $G$ of $\C$ is defined as an object of $\C$ such that $\add_\C G=\C$.
\end{dfn}

\begin{rem}
\begin{enumerate}[(1)]
\item
The additive closure $\add_{\mod R}R$ consists of the free $R$-modules.
\item
Let $\C$ be an additive category, and let $\X$ be a subcategory of $\C$.
Then the following statements hold.
\begin{enumerate}[(a)]
\item
An object $C$ of $\C$ belongs to $\add_\C\X$ if and only if there exist finitely many objects $X_1,\dots,X_n$ in $\X$ such that $C$ is a direct summand of the direct sum $X_1\oplus\cdots\oplus X_n$.
\item
One has $\X=\add_\C\X$ if and only if $\X$ is closed under finite direct sums and direct summands.
\item
Let $\Y$ be another subcategory of $\C$.
If one has $\X\subseteq\Y$, then one has $\add_\C\X\subseteq\add_\C\Y$.
\end{enumerate}
\end{enumerate}
\end{rem}

Next, we give the definition and several properties of syzygies.

\begin{dfn}
\begin{enumerate}[(1)]
\item
For an $R$-module $M$ and an integer $n\ge0$, we denote by $\syz^n_RM$ the {\em $n$th syzygy} of $M$, that is, the image of the $n$th differential map in a minimal free resolution of $M$.
(By definition $\syz^0M=M$.)
\item
Let $\X$ be a subcategory of $\mod R$.
For a nonnegative integer $n$, we denote by $\syz^n_R\X$ the subcategory of $\mod R$ consisting of $R$-modules $M$ such that there exists an exact sequence $0\to M\to F_{n-1}\to\cdots\to F_1\to F_0\to X\to0$ of $R$-modules with $X\in\X$ and $F_0,F_1,\dots,F_{n-1}\in\add R$.
(By definition $\syz^0\X=\X$.)
\end{enumerate}
\end{dfn}

\begin{rem}
\begin{enumerate}[(1)]
\item
Let $M$ be an $R$-module, and let $n$ be a nonnegative integer.
The $n$th syzygy $\syz^nM$ of $M$ is uniquely determined up to isomorphism, since so is a minimal free resolution of the $R$-module $M$.
\item
Let $\X\subseteq\Y$ be subcategories of $\mod R$.
One then has $\syz^n\X\subseteq\syz^n\Y$ for each nonnegative integer $n$.
\item
Let $\X$ be a subcategory of $\mod R$.
If $\X$ is closed under finite direct sums, then so is $\syz^n\X$ for each $n\ge0$.
\item
Let $\X$ be a subcategory of $\mod R$ and let $n>0$.
Then one has $\syz^n\X=\{\syz^nX\oplus R^{\oplus m}\,|\, X\in\X,\,m\ge0\}$.
In particular, the inclusion $\add R\subseteq\syz^n\X$ holds when $\X$ contains the zero module $0$, since $\syz^n0=0$.
\end{enumerate}
\end{rem}

Now, for a given subcategory, we study isomorphism classes of indecomposable modules that belong to it.

\begin{dfn}
Let $\X$ be a subcategory of $\mod R$.
Then the set of isomorphism classes of indecomposable $R$-modules belonging to $\X$ is denoted by $\ind\X$, namely, one has $\ind\X=\{[X]\,|\,X\in\X\}$.
\end{dfn}

\begin{rem}
Let $\X$ and $\Y$ be subcategories of $\mod R$.
Then the following two statements hold.
\begin{enumerate}[(1)]
\item
If there is an inclusion $\X\subseteq\Y$ of subcategories, then there is an inclusion $\ind\X\subseteq\ind\Y$ of sets.
\item
Suppose that the subcategories $\X$ and $\Y$ are closed under finite direct sums and direct summands.
Then the converse of (1) holds: $\ind\X\subseteq\ind\Y$ implies $\X\subseteq\Y$.
Indeed, let $X\in\X$.
Decompose $X\cong X_1\oplus\cdots\oplus X_n$ into indecomposable $R$-modules.
As $\X$ is closed under direct summands, each $X_i$ belongs to $\X$, so that it belongs to $\Y$.
Since $\Y$ is closed under finite direct sums, the module $X$ belongs to $\Y$. 
\end{enumerate}
\end{rem}

The lemma below plays an important role in the proof of one of the main results of this paper.

\begin{lem}\label{13}
Let $X$ be an $R$-module, and let $n\ge0$ be an integer.
Suppose that the inclusion $\syz^n(\mod R)\subseteq\add X$ holds.
Then there exists an integer $r\ge0$ such that the equality $\add(\syz^{in}X)=\add(\syz^{(i+1)n}X)$ holds for all integers $i\ge r$.
As a consequence, one has that $\add(R\oplus\syz^{in}X)=\add(R\oplus\syz^{(i+1)n}X)$ for all integers $i\ge r$.
\end{lem}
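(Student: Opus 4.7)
The plan is to produce, from the hypothesis $\syz^n(\mod R) \subseteq \add X$, a descending chain of additive subcategories
\[
\add(\syz^n X) \supseteq \add(\syz^{2n} X) \supseteq \add(\syz^{3n} X) \supseteq \cdots
\]
inside the Krull--Schmidt category $\add X$, and then to invoke the finiteness of the indecomposable summands of $X$ to force this chain to stabilize. The only real content is establishing the chain; the stabilization and the final consequence are then routine.

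The key input is the additivity of minimal syzygies: since a minimal free resolution of a direct sum is the direct sum of minimal free resolutions of the summands, the operation $\syz^n$ satisfies $\syz^n(A \oplus B) \cong \syz^n A \oplus \syz^n B$. Using this, I would prove by induction on $i \ge 1$ that $\syz^{(i+1)n} X \in \add(\syz^{in} X)$. For the base case $i = 1$, applying the hypothesis to $M = X$ gives $\syz^n X \in \add X$, hence $X^{\oplus m} \cong \syz^n X \oplus Z$ for some $m$ and some $R$-module $Z$; applying $\syz^n$ yields $(\syz^n X)^{\oplus m} \cong \syz^{2n} X \oplus \syz^n Z$, so $\syz^{2n} X$ is a direct summand of $(\syz^n X)^{\oplus m}$. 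The inductive step is identical: given a decomposition $(\syz^{in} X)^{\oplus k} \cong \syz^{(i+1)n} X \oplus W$, applying $\syz^n$ yields $\syz^{(i+2)n} X \in \add(\syz^{(i+1)n} X)$.

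With the descending chain in hand, stabilization follows because $X$ decomposes into finitely many indecomposables ($\mod R$ is Krull--Schmidt, as $R$ is commutative noetherian local), so $\add X$ contains only finitely many isomorphism classes of indecomposable objects, and any additive subcategory closed under summands is determined by the subset of these classes it contains. Hence the descending chain has finite length, producing the required $r \ge 1$ with $\add(\syz^{in} X) = \add(\syz^{(i+1)n} X)$ for all $i \ge r$. The final assertion is then immediate, since equality of additive closures is preserved under adjoining a fixed object (here $R$) to both generators. I anticipate no serious obstacle beyond the bookkeeping in the induction.
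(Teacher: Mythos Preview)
Your proposal is correct and follows essentially the same approach as the paper: both arguments observe that $\syz^nX\in\syz^n(\mod R)\subseteq\add X$, deduce the descending chain $\add X\supseteq\add(\syz^nX)\supseteq\add(\syz^{2n}X)\supseteq\cdots$ (the paper does this in one line, you spell out the induction using additivity of $\syz^n$), and then invoke the finiteness of $\ind(\add X)$ via the Krull--Schmidt property to force stabilization. The only cosmetic difference is that the paper's chain begins at $i=0$ while yours begins at $i=1$, which is immaterial.
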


\begin{proof}
Since $\syz^nX\in\syz^n(\mod R)\subseteq\add X$, we get a descending chain $\add X\supseteq\add\syz^nX\supseteq\add\syz^{2n}X\supseteq\cdots$ of subcategories of $\mod R$, which induces a descending chain $\ind(\add X)\supseteq\ind(\add\syz^nX)\supseteq\ind(\add\syz^{2n}X)\supseteq\cdots$ of sets.
By \cite[Theorem 2.2]{LW}, the set $\ind(\add X)$ is finite.
The latter descending chain stabilizes: there exists $r\ge0$ with $\ind(\add\syz^{in}X)=\ind(\add\syz^{(i+1)n}X)$ for all $i\ge r$.
Hence $\add\syz^{in}X=\add\syz^{(i+1)n}X$ for all $i\ge r$.
\end{proof}

\section{Finite CM type on the punctured spectrum and the triangulated category $\d(R)$}

In this section, we first investigate the J-1 property of a local ring, and relate it to the property of being locally of finite CM type on the punctured spectrum.
Then we give the definition of the triangulated category $\d(R)$ and explore its fundamental properties.
Let us begin with recalling several notions.

\begin{dfn}
\begin{enumerate}[(1)]
\item
The {\em punctured spectrum} of a local ring $(R,\m)$ is defined as the set $\spec R\setminus\{\m\}$. 
\item
Let $\P$ be a property of local rings.
It is said that $R$ {\em locally satisfies $\P$ on the punctured spectrum} if the local ring $R_\p$ satisfies $\P$ for any $\p\in\spec R\setminus\{\m\}$.
We similarly define the module and complex versions. 
\item
Denote by $\reg R$ and $\sing R$ the {\em regular locus} and the {\em singular locus} of $R$, respectively.
Namely, $\reg R$ is the set of prime ideals $\p$ such that $R_\p$ is regular, and $\sing R$ is the complement of $\reg R$ in $\spec R$.
\item
Following \cite[(32.B)]{M}, we say that $R$ is {\em J-1} if $\reg R$ is an open subset of $\spec R$ in the Zariski topology.
Needless to say, this is equivalent to the condition that $\sing R$ is a closed subset of $\spec R$.
\item
A {\em specialization-closed} subset of $\spec R$ is by definition a subset $W$ of $\spec R$ such that if $\p$ is a prime ideal of $R$ that belongs to the set $W$ and $\q$ is a prime ideal of $R$ that contains $\p$, then $\q$ belongs to $W$.
\item
Let $\Phi$ be a subset of $\spec R$.
The {\em (Krull) dimension} $\dim\Phi$ of $\Phi$ is defined to be the supremum of integers $n\ge0$ such that there exists a chain $\p_0\subsetneq\p_1\subsetneq\cdots\subsetneq\p_n$ of prime ideals of $R$ which belong to the set $\Phi$.
\item
We say that $R$ is an {\em isolated singularity} if $R$ is locally regular on the punctured spectrum, or equivalently, if $\sing R\subseteq\{\m\}$, or equivalently, if $\sing R$ has dimension at most zero (i.e., equal to $0$ or $-\infty$).
\end{enumerate}
\end{dfn}

\begin{rem}
\begin{enumerate}[(1)]
\item
The singular locus $\sing R$ of $R$ is always a specialization-closed subset of $\spec R$.
\item
A subset of $\spec R$ is specialization-closed if and only if it is a (possibly infinite) union of closed subsets of $\spec R$ in the Zariski topology.
In particular, every closed subset of $\spec R$ is specialization-closed.
\item
Let $W$ be a specialization-closed subset of $\spec R$.
Then one has $\dim W=\sup\{\dim R/\p\,|\,\p\in W\}$.
\item
A local ring $R$ is regular if and only if the equality $\sing R=\emptyset$ holds, if and only if the equality $\dim\sing R=-\infty$ holds.
In particular, in our sense, any regular local ring is an isolated singularity.
\end{enumerate}
\end{rem}

If $\dim R\le1$, then $\spec R$ consists of the minimal prime ideals and the maximal ideal of $R$, whence $\spec R$ is finite.
The first part of the lemma below thus follows, whose second part is easily deduced from the first.

\begin{lem}\label{16}
Let $R$ be a local ring of dimension at most one.
Then $\spec R$ is a finite set, and $R$ is J-1.
\end{lem}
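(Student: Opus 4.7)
The plan is to argue both parts almost directly from the hypothesis $\dim R \le 1$, which forces $\spec R$ to collapse to a very simple shape. First I would establish finiteness of $\spec R$. Since $R$ is noetherian, the set $\Min R$ of minimal primes is finite. When $\dim R\le 1$, every prime ideal is either minimal or maximal, because any chain $\p_0\subsetneq\p_1\subsetneq\cdots$ has length at most $1$. Hence
\[
\spec R=\Min R\cup\{\m\},
\]
which is a finite set. This handles the first assertion and is essentially the content already recorded in the sentence preceding the lemma.

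For the second assertion, the strategy is to use that on a finite spectrum, specialization-closed and closed coincide. By Remark 3.2(1), the subset $\sing R$ is specialization-closed in $\spec R$, and by Remark 3.2(2) this means
\[
\sing R=\bigcup_{\p\in\sing R}V(\p).
\]
Since $\spec R$ is finite by the first part, so is $\sing R$, so this union is finite. A finite union of Zariski-closed subsets is Zariski-closed, so $\sing R$ is closed in $\spec R$. Equivalently, $\reg R$ is open, so $R$ is J-1 by the defining condition in Definition 3.1(4).

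There is essentially no obstacle here: the argument is a direct chase through the definitions once one observes that a noetherian ring of dimension at most one has a finite spectrum. The only subtle point worth highlighting is the reduction from ``specialization-closed'' to ``closed'' via finiteness; this is what turns the general fact that $\sing R$ is specialization-closed into the stronger statement needed for J-1.
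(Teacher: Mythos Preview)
Your proof is correct and follows essentially the same approach as the paper. The paper records the first part in the sentence preceding the lemma (exactly as you do) and then only says the second part is ``easily deduced from the first''; your argument via specialization-closedness and finiteness is precisely the natural way to fill in that deduction.
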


Now we move on to studying maximal Cohen--Macaulay modules.
To begin with, we recall the definition.

\begin{dfn}
\begin{enumerate}[(1)]
\item
We say that an $R$-module $M$ is {\em maximal Cohen--Macaulay} if either $M=0$ or $\depth M=\dim R$.
Denote by $\cm(R)$ the subcategory of $\mod R$ consisting of maximal Cohen--Macaulay $R$-modules.
\item
We say that a local ring $R$ has {\em finite CM type} provided that $\ind\cm(R)$ is a finite set.
\end{enumerate}
\end{dfn}

\begin{rem}
\begin{enumerate}[(1)]
\item
Since the zero module is a maximal Cohen--Macaulay module by definition, $\cm(R)$ is an additive category, or to be more precise, $\cm(R)$ is an additive subcategory of $\mod R$.
\item
The subcategory $\cm(R)$ of $\mod R$ is closed under finite direct sums and direct summands.
If the ring $R$ is Cohen--Macaulay, then $\cm(R)$ contains $R$, and is closed under syzygies as a subcategory of $\mod R$.
\end{enumerate}
\end{rem}

We provide a useful lemma on finite CM type, which would be well-known to experts.

\begin{lem}\label{8}
A local ring $R$ has finite CM type if and only if the category $\cm(R)$ has an additive generator.
\end{lem}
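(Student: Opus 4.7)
The plan is to establish both implications directly, using the existence of indecomposable decompositions for finitely generated modules over the noetherian ring $R$ together with the Krull--Schmidt theorem to compare such decompositions.

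For the easier direction, suppose $R$ has finite CM type; I would list $\ind\cm(R)=\{[M_1],\ldots,[M_n]\}$ and take $G=M_1\oplus\cdots\oplus M_n$. To verify $\add G=\cm(R)$, given $M\in\cm(R)$ I would invoke noetherianity to obtain a finite indecomposable decomposition $M\cong N_1\oplus\cdots\oplus N_k$; each $N_j$ lies in $\cm(R)$ because $\cm(R)$ is closed under direct summands, and being indecomposable, $N_j$ is isomorphic to one of the $M_i$'s. Collecting these, $M$ is isomorphic to a direct sum of copies of the $M_i$'s with each multiplicity at most $k$, hence a direct summand of $G^{\oplus k}$, so $M\in\add G$.

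For the converse, let $G$ be an additive generator of $\cm(R)$ and decompose $G\cong G_1\oplus\cdots\oplus G_n$ into indecomposables. Any indecomposable $M\in\cm(R)$ satisfies $M\in\add G$, so there exist $m\ge 0$ and $K\in\cm(R)$ with $M\oplus K\cong G^{\oplus m}=G_1^{\oplus m}\oplus\cdots\oplus G_n^{\oplus m}$. An application of the Krull--Schmidt theorem to these two indecomposable decompositions of the same module forces $M\cong G_j$ for some $j$. Consequently $\ind\cm(R)\subseteq\{[G_1],\ldots,[G_n]\}$ is finite, proving that $R$ has finite CM type.

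The main point requiring care is the use of Krull--Schmidt uniqueness in the converse direction, which depends on the indecomposable summands of maximal Cohen--Macaulay modules having local endomorphism rings; this is standard over complete (or more generally Henselian) local rings and is the routine ingredient the authors have in mind when they call the lemma well-known to experts. Beyond this, the argument is a formal manipulation of the definitions with no additional obstacle.
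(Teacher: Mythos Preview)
Your argument follows the same outline as the paper's proof, and the ``only if'' direction is identical. For the ``if'' direction, however, there is a genuine technical point you flag but do not quite resolve: the paper's standing convention is that $R$ is merely a noetherian local ring, not Henselian or complete, and over such rings the Krull--Schmidt uniqueness you invoke can fail---an indecomposable object of $\add G$ need not be isomorphic to one of the $G_j$. The paper sidesteps this by citing \cite[Theorem~2.2]{LW}, which asserts directly that $\ind(\add G)$ is finite for any finitely generated module $G$ over a noetherian local ring (the proof there passes through the completion, where Krull--Schmidt does hold, together with the fact that completion reflects isomorphisms). Replacing your appeal to Krull--Schmidt with this reference makes your proof coincide with the paper's; as written, your converse direction is valid only under the additional Henselian hypothesis you mention.
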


\begin{proof}
By \cite[Theorem 2.2]{LW} the set $\ind(\add G)$ is finite for an $R$-module $G$.
This shows the ``if'' part. 
To prove the ``only if'' part, assume $\ind\cm(R)$ is finite.
There exist an integer $n\ge0$ and maximal Cohen--Macaulay $R$-modules $G_1,\dots,G_n$ with $\ind\cm(R)=\{[G_1],\dots,[G_n]\}$.
Put $G=G_1\oplus\cdots\oplus G_n$.
As $\cm(R)$ contains $G$ and is closed under finite direct sums and direct summands, $\cm(R)$ contains $\add G$.
Let $M$ be a maximal Cohen--Macaulay $R$-module, and take a decomposition $M\cong M_1\oplus\cdots\oplus M_m$ of $M$ into indecomposable $R$-modules.
Then for each $1\le i\le m$ the isomorphism class $[M_i]$ of $M_i$ belongs to $\ind\cm(R)$, so that $M_i\cong G_{l_i}$ for some $1\le l_i\le n$.
We have $M\cong G_{l_1}\oplus\cdots\oplus G_{l_m}\in\add G$.
We thus obtain a desired equality $\cm(R)=\add G$.
\end{proof}

The condition that $R$ is locally of finite CM type on the punctured spectrum plays an essential role in this paper.
The following lemma describes the relationship of this condition with the size of the singular locus.

\begin{lem}\label{10}
Suppose that $R$ is locally a Cohen--Macaulay ring of finite CM type on the punctured spectrum.
Then the singular locus $\sing R$ has dimension at most one.
Hence, $\sing R$ is a finite set if the ring $R$ is J-1.
\end{lem}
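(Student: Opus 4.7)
The plan is to prove the dimension bound by a localization-and-chain argument, and then derive finiteness from the J-1 hypothesis together with Lemma \ref{16}.

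For the first assertion, I would argue by contradiction: assume there is a chain $\p_0\subsetneq\p_1\subsetneq\p_2$ of prime ideals contained in $\sing R$. The crucial observation is that at least one of $\p_1,\p_2$ lies in the punctured spectrum --- if $\p_2=\m$, then $\p_1\ne\m$; otherwise $\p_2\ne\m$ itself. Pick such a prime $\p$ (i.e.\ $\p=\p_1$ in the first case, $\p=\p_2$ in the second), and let $\q$ be a prime strictly below $\p$ with $\q\in\sing R$ (so $\q=\p_0$ or $\q=\p_1$ respectively). By hypothesis $R_\p$ is a Cohen--Macaulay local ring of finite CM type, so I would invoke the theorem of Huneke--Leuschke that any Cohen--Macaulay local ring of finite CM type is an isolated singularity; this gives $\sing R_\p\subseteq\{\p R_\p\}$. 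However $\q R_\p\in\sing R_\p$ because $(R_\p)_{\q R_\p}\cong R_\q$ is singular, and $\q R_\p\ne\p R_\p$ since $\q\subsetneq\p$. This is the desired contradiction, so $\dim\sing R\le1$.

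For the second assertion, assume in addition that $R$ is J-1, so that $\sing R$ is a closed subset of $\spec R$. Write $\sing R=V(I)$ for an ideal $I$ of $R$, and set $S=R/I$. Then $S$ is a noetherian local ring and $\spec S$ is homeomorphic to $V(I)=\sing R$ via the canonical surjection; in particular $\dim S=\dim\sing R\le1$. Applying Lemma \ref{16} to the local ring $S$, the set $\spec S$ is finite, and hence so is $\sing R$.

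The main obstacle is the first step: the whole argument hinges on the Huneke--Leuschke theorem (``finite CM type implies isolated singularity for Cohen--Macaulay local rings''), without which we could not upgrade the hypothesis on the punctured spectrum to a dimension bound on $\sing R$. Once this is in hand, the chain-localization reduction and the passage to $R/I$ via Lemma \ref{16} are straightforward.
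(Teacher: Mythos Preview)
Your proof is correct and follows essentially the same approach as the paper's: both argue by contradiction from a length-two chain in $\sing R$, localize at a nonmaximal prime in the chain, and invoke the theorem that a Cohen--Macaulay local ring of finite CM type is an isolated singularity (the paper cites \cite[Theorem 7.12]{LW}, which is the Huneke--Leuschke result you name), then finish the J-1 case via $\sing R=\V(I)$ and Lemma \ref{16}. The only cosmetic difference is that the paper immediately takes the top of the chain to be $\m$ (using that $\sing R$ is specialization-closed), which spares your case analysis on whether $\p_2=\m$.
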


\begin{proof}
Suppose that $\sing R$ has dimension at least two.
Then there exists a chain $\p\subsetneq\q\subsetneq\m$ in $\sing R$.
By assumption, $R_\q$ is a Cohen--Macaulay local ring of finite CM type.
By \cite[Theorem 7.12]{LW}, the localization $R_\q$ is an isolated singularity.
Therefore, $R_\p=(R_\q)_{\p R_\q}$ is a regular local ring, but this contradicts the choice of $\p$.
Thus the set $\sing R$ has dimension at most one.
Now assume that $R$ is J-1.
Then we have $\sing R=\V(I)$ for some ideal $I$ of $R$, and see that $\dim R/I\le1$.
Lemma \ref{16} implies that $\spec R/I$ is finite, and so is $\sing R$.
\end{proof}

Now we shall give the definition of the triangulated category $\d(R)$, which is the main target of this paper.

\begin{dfn}
\begin{enumerate}[(1)]
\item
Let $\T$ be a triangulated category.
A {\em thick subcateory} of $\T$ is defined as a triangulated subcategory of $\T$ which is closed under direct summands.
For an object $G$ of $\T$, we denote by $\thick_\T G$ the {\em thick closure} of $G$ in $\T$, which is defined to be the smallest thick subcategory of $\T$ containing $G$.
\item
Denote by $\db(R)$ the bounded derived category of $\mod R$.
Let $\ds(R)$ stand for the {\em singularity category} of $R$, which is the Verdier quotient of $\db(R)$ by the perfect complexes, i.e., $\ds(R)=\db(R)/\thick R$.
(Recall that a {\em perfect complex} is by definition a bounded complex of finitely generated projective modules.)
\item
We define the triangulated category $\d(R)$ to be the Verdier quotient $\db(R)/\thick(R\oplus k)$ of the triangulated category $\db(R)$, which is the same as the Verdier quotient $\ds(R)/\thick k$ of $\ds(R)$.
\end{enumerate}
\end{dfn}

In the rest of this paper we often use the lemma below as a basic tool for computation of objects in $\d(R)$.

\begin{lem}\label{6}
\begin{enumerate}[\rm(1)]
\item
For every object $X$ of $\db(R)$, there exists an exact triangle $F\to X\to M[n]\rightsquigarrow$ in $\db(R)$ such that $F\in\thick_{\db(R)}R$, $M\in\mod R$ and $n\in\Z$.
As a consequence, for every object $X$ of $\d(R)$ there exist an $R$-module $M$ and an integer $n$ such that one has an isomorphism $X\cong M[n]$ in $\d(R)$.  
\item
Let $M\in\mod R$ and $n\in\Z$.
Assume that $n\ge0$.
Then there exists an exact triangle $F\to M\to\syz^nM[n]\rightsquigarrow$ in $\db(R)$ such that $F$ is in $\thick_{\db(R)}R$.
Consequently, one has an isomorphism $M[-n]\cong\syz^nM$ in $\d(R)$.
\item
Let $X$ be an object of $\db(R)$.
Then there is an isomorphism $X\cong0$ in the triangulated category $\d(R)$ if and only if the $R$-complex $X$ locally has finite projective dimension on the punctured spectrum, that is to say, the $R_\p$-complex $X_\p$ is isomorphic in $\db(R_\p)$ to a perfect $R_\p$-complex for every $\p\in\spec R\setminus\{\m\}$.
\end{enumerate}
\end{lem}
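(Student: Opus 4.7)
The plan is to establish (1), (2), and (3) in order, using bounded-above minimal free resolutions and stupid truncation for (1), the octahedral axiom for (2), and the support-theoretic characterization of $\thick_{\db(R)}(R \oplus k)$ for (3). For (1), I would represent $X$ by a minimal free resolution $P \to X$, which is bounded above since $X \in \db(R)$. For an integer $n$ large enough that $H^i(X) = 0$ for all $i \le -n$, the stupid truncations fit into a short exact sequence of complexes
\[
0 \to \sigma^{\ge -n+1}P \to P \to \sigma^{\le -n}P \to 0,
\]
which induces an exact triangle in $\db(R)$. The kernel $F := \sigma^{\ge -n+1}P$ is a bounded complex of finitely generated free modules, so $F \in \thick_{\db(R)}R$. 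The cokernel $\sigma^{\le -n}P$ has cohomology concentrated in the single degree $-n$, so it is isomorphic in $\db(R)$ to $M[n]$ for $M := H^{-n}(\sigma^{\le -n}P)$. This produces the desired triangle $F \to X \to M[n] \rightsquigarrow$. The consequence in $\d(R)$ is immediate: $F \in \thick_{\db(R)}R \subseteq \thick_{\db(R)}(R \oplus k)$ becomes zero in $\d(R)$, so the triangle collapses to the isomorphism $X \cong M[n]$.

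For (2), I would proceed by induction on $n$, the case $n = 0$ being trivial. For the inductive step, the short exact sequence $0 \to \syz^n M \to F_{n-1} \to \syz^{n-1}M \to 0$, with $F_{n-1}$ the $(n-1)$st free module in the minimal resolution of $M$, rotates to the triangle
\[
F_{n-1}[n-1] \to \syz^{n-1}M[n-1] \to \syz^n M[n] \rightsquigarrow.
\]
Applying the octahedral axiom to the composition $M \to \syz^{n-1}M[n-1] \to \syz^n M[n]$, whose first leg is supplied by the inductive triangle $F \to M \to \syz^{n-1}M[n-1] \rightsquigarrow$, produces a triangle $F' \to M \to \syz^n M[n] \rightsquigarrow$ along with an auxiliary triangle $F \to F' \to F_{n-1}[n-1] \rightsquigarrow$. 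Since both $F$ and $F_{n-1}[n-1]$ lie in $\thick_{\db(R)}R$, so does $F'$, completing the induction.

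For (3), the definition of the Verdier quotient yields that $X \cong 0$ in $\d(R)$ if and only if $X \in \thick_{\db(R)}(R \oplus k)$. The forward direction is formal: the full subcategory of $\db(R)$ consisting of complexes whose localization at each $\p \ne \m$ is perfect is thick and contains both $R$ and $k$ (the latter because $k_\p = 0$), hence contains $\thick_{\db(R)}(R \oplus k)$. For the converse, I would first use (1) to reduce to the case $X = M$ is a module with $M_\p$ of finite projective dimension for every $\p \ne \m$. The openness of the loci $\{\p : \operatorname{pd}_{R_\p}M_\p \le n\}$ together with the Noetherianness of $\spec R$ then lets one replace $M$ by a sufficiently high syzygy $\syz^N M$ that is locally free on the entire punctured spectrum, so the remaining obstruction is concentrated at $\m$ and may be assembled from copies of $k$ by a length argument. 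The main obstacle is precisely this converse direction, which requires a careful support-theoretic argument or an appeal to the machinery underlying \cite{dlr}; parts (1) and (2) are purely formal manipulations with truncation and the octahedral axiom.
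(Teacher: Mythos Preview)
Your arguments for (1) and (2) are correct and are exactly what the paper's citation \cite[Lemma 2.4]{sing} unpacks to: brutal truncation of a free resolution for (1), and iterated use of the syzygy short exact sequences for (2). The paper does not spell this out; it simply cites.

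For (3), the paper's proof is equally brief: it observes that $X\cong 0$ in $\d(R)$ iff $X\in\thick_{\db(R)}(R\oplus k)$ and then invokes \cite[Corollary 4.3(3)]{kos} to identify this thick closure with the complexes locally perfect on the punctured spectrum. Your forward direction matches this. For the converse, however, your sketch does not quite work as written: passing to a high syzygy $\syz^N M$ that is locally free on the punctured spectrum is fine, but such a module is not of finite length, so it cannot be ``assembled from copies of $k$ by a length argument'' in any direct way. The actual mechanism (which is the content of \cite{kos}) is a Koszul-complex argument: if $\boldsymbol{x}$ generates $\m$, then the Koszul complex $K(\boldsymbol{x};\syz^N M)$ lies in $\thick(\syz^N M)$ and has finite-length homology, hence lies in $\thick(k)$; one then recovers $\syz^N M$ from $K(\boldsymbol{x};\syz^N M)$ and free modules because $\syz^N M$ is locally free away from $\m$. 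You correctly flag this step as the main obstacle and propose to appeal to external machinery, which is precisely what the paper does---just note that the right reference is \cite{kos}, not \cite{dlr}, and that the underlying idea is Koszul homology rather than a filtration by residue fields.
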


\begin{proof}
Assertions (1) and (2) follow from \cite[Lemma 2.4]{sing} for instance.
Let us show assertion (3).
An object $X\in\db(R)$ is isomorphic to $0$ in $\d(R)$ if and only if $X$ is in $\thick_{\db(R)}(R\oplus k)$.
By \cite[Corollary 4.3(3)]{kos}, this thick closure consists of the $R$-complexes locally of finite projective dimension on the punctured spectrum.
\end{proof}

As a direct application of the above lemma, we obtain a characterization of isolated singularities.

\begin{prop}\label{11}
One has that $\d(R)=0$ if and only if the local ring $R$ is an isolated singularity.
\end{prop}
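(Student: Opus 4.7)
The plan is to deduce this directly from Lemma \ref{6}(3) combined with the Auslander--Buchsbaum--Serre characterization of regular local rings, without needing any new technical input. The key observation is that the condition ``$X \cong 0$ in $\d(R)$'' has been translated in Lemma \ref{6}(3) to a purely local condition on the punctured spectrum, and isolated singularity is exactly the pointwise regularity statement on that spectrum.

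For the ``if'' direction, I would suppose $R$ is an isolated singularity and take an arbitrary object $X \in \db(R)$. For each $\p \in \spec R \setminus \{\m\}$, the ring $R_\p$ is regular, so every finitely generated $R_\p$-module has finite projective dimension, and hence every object of $\db(R_\p)$ is perfect; in particular $X_\p$ is perfect over $R_\p$. Applying Lemma \ref{6}(3) gives $X \cong 0$ in $\d(R)$. Since $X$ was arbitrary, $\d(R) = 0$.

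For the ``only if'' direction, I would argue contrapositively: if $R$ is not an isolated singularity, exhibit an object of $\db(R)$ which does not vanish in $\d(R)$. Pick $\p \in \sing R$ with $\p \ne \m$ and consider the $R$-module $M = R/\p$. Then $M_\p = \kappa(\p)$, which has infinite projective dimension over the non-regular local ring $R_\p$ (this is the Auslander--Buchsbaum--Serre theorem). Hence $M$ fails to have finite projective dimension locally on the punctured spectrum, and Lemma \ref{6}(3) forces $M \not\cong 0$ in $\d(R)$, so $\d(R) \ne 0$.

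There is essentially no obstacle here: both directions are a one-line application of Lemma \ref{6}(3), with the Auslander--Buchsbaum--Serre theorem bridging ``finite projective dimension of the residue field'' and ``regularity''. The only small care needed is choosing the correct test module in the ``only if'' direction ($R/\p$ for a non-regular prime $\p \ne \m$) so that its localization at $\p$ reproduces $\kappa(\p)$.
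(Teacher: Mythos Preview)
Your proposal is correct and follows essentially the same route as the paper: both reduce via Lemma~\ref{6}(3) to a local perfectness condition on the punctured spectrum, use the Auslander--Buchsbaum--Serre theorem to handle the ``if'' direction, and test with $X=R/\p$ for a nonmaximal $\p\in\sing R$ in the ``only if'' direction. The only cosmetic difference is that the paper explicitly invokes Lemma~\ref{6}(1) to reduce complexes to modules, whereas you appeal directly to the well-known fact that over a regular local ring every bounded complex is perfect.
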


\begin{proof}
In view of Lemma \ref{6}(3), it is enough to verify that the local ring $R$ is an isolated singularity if and only if for each object $X$ of $\db(R)$ and each nonmaximal prime ideal $\p$ of $R$ the $R_\p$-complex $X_\p$ has finite projective dimension.
The ``only if'' part follows from the Auslander--Buchsbaum--Serre theorem and Lemma \ref{6}(1).
The ``if'' part is shown by taking $X=R/\p$ and using the Auslander--Buchsbaum--Serre theorem.
\end{proof}

\section{Necessary and sufficient conditions for $\d(R)$ to admit an additive generator}

In this section, we shall state and prove the main theorems of this paper, which give necessary and sufficient conditions for our triangulated category $\d(R)$ to have an additive generator.
In the proof of the first main theorem, we need the following general fundamental fact about Verdier quotients of triangulated categories.

\begin{lem}\label{3}
Let $\T$ be a triangulated category, and $\U$ a thick subcategory of $\T$.
Let $X,Y\in\T$.
Then $X\cong Y$ in $\T/\U$ if and only if there exist exact triangles $E\to X\to A\rightsquigarrow$ and $E\to Y\to B\rightsquigarrow$ in $\T$ with $A,B\in\U$. 
\end{lem}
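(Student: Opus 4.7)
The statement is the standard ``common roof'' characterization of isomorphisms in a Verdier quotient, so the plan is to reduce it to the calculus of fractions description of $\T/\U$.

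For the ``if'' direction, I would argue as follows. Given the two triangles $E\to X\to A\rightsquigarrow$ and $E\to Y\to B\rightsquigarrow$ with $A,B\in\U$, both objects $A$ and $B$ become zero in $\T/\U$ because $\U$ is thick. Hence in $\T/\U$ the morphisms $E\to X$ and $E\to Y$ fit into exact triangles whose third terms vanish, so they are isomorphisms in $\T/\U$. Composing the isomorphism $Y\cong E$ with $E\cong X$ yields an isomorphism $X\cong Y$ in $\T/\U$.

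For the ``only if'' direction, I would invoke the explicit construction of $\T/\U$ via roofs. Any morphism $X\to Y$ in $\T/\U$ is represented by a diagram $X\xleftarrow{s}E\xrightarrow{f}Y$ in $\T$ such that the cone of $s$ lies in $\U$; equivalently, there is an exact triangle $E\xrightarrow{s}X\to A\rightsquigarrow$ with $A\in\U$. A standard fact of the calculus of fractions is that such a roof represents an \emph{isomorphism} in $\T/\U$ if and only if the cone of $f$ also lies in $\U$. Thus, given an isomorphism $X\cong Y$ in $\T/\U$, one chooses such a representing roof; completing $f:E\to Y$ to an exact triangle $E\xrightarrow{f}Y\to B\rightsquigarrow$ produces the second required triangle with $B\in\U$.

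The key step, and the main technical point, is the claim that an isomorphism in $\T/\U$ can be represented by a roof both of whose legs have cones in $\U$. The cleanest way to see this is to note that if $s^{-1}f$ is invertible in $\T/\U$ with inverse represented by some roof $Y\xleftarrow{t}F\xrightarrow{g}X$, then refining these two roofs to a common denominator (using that morphisms $E'\to E$ with cone in $\U$ form a multiplicative system) produces a roof for the identity of $X$ whose comparison with the original roof forces $\cone(f)\in\U$. Rather than reproving this from scratch, I would simply cite a standard reference on Verdier localization (e.g.\ Neeman's book on triangulated categories). Once that fact is in hand, the lemma follows immediately by extracting the two exact triangles from the single common object $E$.
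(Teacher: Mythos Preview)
Your proposal is correct and follows essentially the same route as the paper: both directions are handled the same way, and for the ``only if'' part you and the paper alike represent the isomorphism by a roof $X\xleftarrow{s}E\xrightarrow{f}Y$ with $\cone(s)\in\U$, then invoke Neeman's book to conclude that $\cone(f)\in\U$ as well. The paper phrases the key step slightly differently---observing that $(E\xleftarrow{1}E\xrightarrow{f}Y)$ is itself an isomorphism in $\T/\U$ and hence $\cone(f)\in\U$---but this is the same argument in different words.
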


\begin{proof}
The ``if'' part is obvious.
To prove the ``only if'' part, we let $(X\xleftarrow{s}E\xrightarrow{a}Y)$ be an isomorphism in the Verdier quotient $\T/\U$, where $s$ and $a$ are morphisms in $\T$ and $A:=\cone(s)$ belongs to $\U$.
An exact triangle $E\xrightarrow{s}X\to A\rightsquigarrow$ in $\T$ is induced.
Since $(E\xleftarrow{1}E\xrightarrow{a}Y)$ is an isomorphism in $\T/\U$, the object $B:=\cone(a)$ belongs to $\U$; see \cite[2.1.23 and 2.1.35]{N}.
An exact triangle $E\xrightarrow{a}Y\to B\rightsquigarrow$ in $\T$ is induced.
We are done.
\end{proof}

Next we recall the definition of the stable category of maximal Cohen--Macaulay modules and a fact on it.

\begin{dfn}
Let $R$ be a Cohen--Macaulay local ring.
Let $\lcm(R)$ be the {\em stable category} of $\cm(R)$.
This category is defined as follows: the objects of $\lcm(R)$ are the same as those of $\cm(R)$, and the hom-set from $M$ to $N$ is given by the quotient of $\Hom_R(M,N)$ by the homomorphisms factoring through free $R$-modules.
\end{dfn}

\begin{rem}
Assume $R$ is a Gorenstein local ring.
Then $\lcm(R)$ is a triangulated category, and there exists a triangle equivalence $\lcm(R)\cong\ds(R)$.
This is a celebrated result due to Buchweitz \cite[Theorem 4.4.1]{B}.
\end{rem}

We are ready to give a necessary condition for the category $\d(R)$ to possess an additive generator.

\begin{thm}\label{7}
Let $R$ be a local ring which is locally Gorenstein on the punctured spectrum.
If the triangulated category $\d(R)$ admits an additive generator, then $R$ locally has finite CM type on the punctured spectrum.
\end{thm}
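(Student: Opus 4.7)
My plan is to prove, for each non-maximal prime $\p$ of $R$, that $R_\p$ has finite CM type, by transferring an additive generator of $\d(R)$ to $\ds(R_\p)$ via localization and then invoking Buchweitz's equivalence. Fix $\p \in \spec R \setminus \{\m\}$ and consider the localization functor $(-)_\p \colon \db(R) \to \db(R_\p)$ composed with the canonical projection to $\ds(R_\p)$. This composite kills $R$ (because $R_\p$ is perfect over $R_\p$) and kills $k = R/\m$ (because $(R/\m)_\p = 0$: any element of $\m \setminus \p$ is a unit in $R_\p$ yet annihilates $R/\m$), so it annihilates $\thick_{\db(R)}(R \oplus k)$ and, by the universal property of the Verdier quotient, descends to a triangulated functor $F \colon \d(R) \to \ds(R_\p)$.

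I would then verify that $F$ is essentially surjective. By Lemma \ref{6}(1) applied over $R_\p$, every object of $\ds(R_\p)$ is isomorphic to $N[s]$ for some finitely generated $R_\p$-module $N$ and some integer $s$, and any such $N$ lifts from an $R$-module $N'$ with $N'_\p \cong N$ by lifting a finite presentation of $N$ to one over $R$. Given an additive generator $G$ of $\d(R)$, essential surjectivity yields that $F(G)$ is an additive generator of $\ds(R_\p)$: for $Y \in \ds(R_\p)$, choose $X \in \d(R)$ with $F(X) \cong Y$, then pick $X' \in \d(R)$ and $\ell \ge 0$ with $X \oplus X' \cong G^{\oplus \ell}$, and apply $F$ to obtain $Y \oplus F(X') \cong F(G)^{\oplus \ell}$ in $\ds(R_\p)$.

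Since $R_\p$ is Gorenstein, Buchweitz's equivalence $\lcm(R_\p) \cong \ds(R_\p)$ applies; under it, $F(G)$ corresponds to a maximal Cohen--Macaulay $R_\p$-module $L$, since every object of $\lcm(R_\p)$ is literally a CM module. Then $L \oplus R_\p$ will be an additive generator of $\cm(R_\p)$: for any maximal Cohen--Macaulay $M$ over $R_\p$, a stable isomorphism $M \oplus M' \cong L^{\oplus m}$ in $\lcm(R_\p)$ lifts to a genuine $R_\p$-isomorphism $M \oplus M' \oplus R_\p^a \cong L^{\oplus m} \oplus R_\p^b$ in $\cm(R_\p)$, placing $M$ in $\add(L \oplus R_\p)$. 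Lemma \ref{8} then yields that $R_\p$ has finite CM type, so $R$ locally has finite CM type on the punctured spectrum. The main obstacle I anticipate is producing $F$ and establishing its essential surjectivity cleanly; the remaining translation between $\d(R)$, $\ds(R_\p)$, $\lcm(R_\p)$, and $\cm(R_\p)$ is routine, but the stable-versus-literal-isomorphism bookkeeping at the end deserves explicit attention.
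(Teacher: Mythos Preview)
Your proposal is correct and follows essentially the same approach as the paper: localize the additive generator to obtain one for $\ds(R_\p)$, then pass through Buchweitz's equivalence and Lemma \ref{8}. The only packaging difference is that you invoke the universal property of the Verdier quotient to produce a triangulated functor $F:\d(R)\to\ds(R_\p)$, whereas the paper unpacks this step by hand via Lemma \ref{3}, lifting the isomorphism $Y\oplus Z\cong G^{\oplus n}$ in $\d(R)$ to a pair of exact triangles in $\db(R)$ with cones in $\thick(R\oplus k)$ and then localizing those triangles.
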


\begin{proof}
By assumption, we have $\d(R)=\add G$ for some $G\in\d(R)$.
Fix a nonmaximal prime ideal $\p$ of $R$.

We claim that $\ds(R_\p)=\add G_\p$.
Indeed, let $X\in\db(R_\p)$.
Then there exists $Y\in\db(R)$ such that $Y_\p\cong X$ in $\db(R_\p)$; see \cite[Lemma 4.2]{ddc} for instance. 
We find an object $Z\in\db(R)$ and a nonnegative integer $n$ such that $Y\oplus Z\cong G^{\oplus n}$ in $\d(R)$.
Lemma \ref{3} gives rise to exact triangles $E\to Y\oplus Z\to A\rightsquigarrow$ and $E\to G^{\oplus n}\to B\rightsquigarrow$ in $\db(R)$ such that $A,B\in\thick_{\db(R)}(R\oplus k)$.
Localization at $\p$ yields exact triangles $E_\p\to Y_\p\oplus Z_\p\to A_\p\rightsquigarrow$ and $E_\p\to G_\p^{\oplus n}\to B_\p\rightsquigarrow$ in $\db(R_\p)$.
Note here that $A_\p,B_\p\in\thick_{\db(R_\p)}(R_\p\oplus k_\p)=\thick_{\db(R_\p)}R_\p$.
Consequently, the object $Y_\p\oplus Z_\p$ is isomorphic to $G_\p^{\oplus n}$ in $\ds(R_\p)$.
Thus $X$ belongs to $\add_{\ds(R_\p)}G_\p$, and the claim follows.

Since $R_\p$ is Gorenstein, there exists an equivalence $\ds(R_\p)\cong\lcm(R_\p)$ of categories.
Let $H\in\lcm(R_\p)$ be the object that corresponds via this equivalence to $G_\p\in\ds(R_\p)$.
The above claim shows $\lcm(R_\p)=\add H$, which implies that $\cm(R_\p)=\add(R_\p\oplus H)$.
From Lemma \ref{8} we deduce that $R_\p$ has finite CM type.
\end{proof}

In the proof of our next theorem, it is necessary for us to use the notion of nonfree loci for modules.

\begin{dfn}
For an $R$-module $M$, we denote by $\nf(M)$ the {\em nonfree locus} of $M$, which is defined to be the set of prime ideals $\p$ of $R$ such that the $R_\p$-module $M_\p$ is nonfree.
\end{dfn}

\begin{rem}
\begin{enumerate}[(1)]
\item
For every $R$-module $M$ the subset $\nf(M)$ of $\spec R$ is closed in the Zariski topology.
\item
Let $M$ be an $R$-module.
Then $\nf(\syz^iM)$ is contained in $\sing R$ for all integers $i\ge d$.
This is an immediate consequence of the Auslander--Buchsbaum--Serre theorem and the Auslander--Buchsbaum formula.
\end{enumerate}
\end{rem}

Now we can show the theorem below giving a sufficient condition for $\d(R)$ to have an additive generator.

\begin{thm}\label{4}
Let $R$ be a J-1 local ring.
Suppose that $R$ is locally a Cohen--Macaulay ring of finite CM type on the punctured spectrum.
Then the triangulated category $\d(R)$ possesses an additive generator.
\end{thm}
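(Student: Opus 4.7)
The plan is to find an $R$-module $X$ and an integer $n$ satisfying the hypothesis of Lemma~\ref{13}, and then use the resulting stabilization to collapse infinitely many shifts into a finite additive generator of $\d(R)$ via Lemma~\ref{6}.

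First I would invoke Lemma~\ref{10}: since $R$ is J-1 and locally Cohen--Macaulay of finite CM type on the punctured spectrum, the set $\sing R=\{\m,\p_1,\dots,\p_s\}$ is finite. For each $i$, Lemma~\ref{8} provides an additive generator $H_i$ of $\cm(R_{\p_i})$; by clearing denominators in a finite presentation of $H_i$ over $R_{\p_i}$, each $H_i$ lifts to a finitely generated $R$-module $\tilde H_i$ with $(\tilde H_i)_{\p_i}\cong H_i$. Take $X:=R\oplus\tilde H_1\oplus\cdots\oplus\tilde H_s$, possibly further enlarged. The crucial assertion — which I expect to be the main obstacle — is that for some $n\ge d$, the inclusion
\[
\syz^n(\mod R)\subseteq\add X
\]
holds inside $\mod R$. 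Locally this is clear: for $n\ge d$, the depth lemma applied over the Cohen--Macaulay ring $R_{\p_i}$ (of dimension strictly less than $n$) shows $(\syz^n M)_{\p_i}$ is maximal Cohen--Macaulay, so it lies in $\add H_i=\add(\tilde H_i)_{\p_i}\subseteq\add X_{\p_i}$; and outside $\sing R$ the module $\syz^n M$ is locally free. The hard part will be upgrading these localized containments into an honest splitting $\syz^n M\oplus W\cong X^p$ of $R$-modules — a gluing problem that the finiteness of $\sing R$ from Lemma~\ref{10} together with the J-1 hypothesis should make tractable, perhaps after enlarging $X$ by finitely many auxiliary indecomposable modules needed to absorb the global behavior.

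Granting this inclusion, the rest is bookkeeping with Lemma~\ref{6}. By Lemma~\ref{13}, there is $r\ge 0$ with $\add(\syz^{in}X)=\add(\syz^{(i+1)n}X)$ for every $i\ge r$; set $Y:=\syz^{rn}X$, so that $\syz^n Y\in\add Y$ and $Y\in\add\syz^n Y$ in $\mod R$. Via Lemma~\ref{6}(2) these translate in $\d(R)$ to $Y[-n]\in\add_{\d(R)}Y$ and $Y\in\add_{\d(R)}Y[-n]$, so $\add_{\d(R)}Y$ is invariant under the shift $[n]$. Consequently the object
\[
G:=Y\oplus Y[1]\oplus\cdots\oplus Y[n-1]
\]
satisfies $Y[j]\in\add_{\d(R)}G$ for every integer $j$. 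Finally, for any $Z\in\d(R)$, Lemma~\ref{6}(1) writes $Z\cong M[k]$ for some $M\in\mod R$ and $k\in\Z$; a further use of Lemma~\ref{6}(2) gives $M[k]\cong(\syz^n M)[k+n]$ in $\d(R)$, and since $\syz^n M\in\add X$ in $\mod R$, we obtain $Z\in\add_{\d(R)}(X[k+n])\subseteq\add_{\d(R)}G$. This yields $\d(R)=\add_{\d(R)}G$, so $G$ is an additive generator.
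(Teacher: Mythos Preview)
Your argument has a genuine gap at the ``crucial assertion'' $\syz^n(\mod R)\subseteq\add X$ in $\mod R$. This global inclusion is false in general, and no finite enlargement of $X$ can repair it. Take $R=k[\![x,y]\!]/(x^2)$: this is a $1$-dimensional complete local hypersurface, so it is J-1 and locally a Cohen--Macaulay ring of finite CM type on the punctured spectrum (the unique minimal prime $\p=(x)$ gives an artinian hypersurface $R_\p$). Yet $R$ itself has infinitely many pairwise nonisomorphic indecomposable maximal Cohen--Macaulay modules, namely the ideals $(x,y^m)$ for $m\ge0$, and every one of them occurs as a first syzygy. Since $\ind(\add X)$ is finite for any single module $X$, the containment $\syz^n(\mod R)\subseteq\add X$ fails for every $n\ge1$. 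In short, the local finite-CM-type hypothesis does not force any global finiteness in $\mod R$; the ``gluing problem'' you flagged is not merely hard but impossible in $\mod R$.

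The paper never attempts a global splitting. It applies Lemma~\ref{13} only \emph{locally}, to each inclusion $\syz^d(\mod R_{\p_i})\subseteq\add Y_{\p_i}$, extracting a single $r$ that works at every $\p_i$. The bridge from these local containments to $\d(R)$ is an external lemma \cite[Lemma~3.7(2)]{dlr}: if a subcategory $\X\subseteq\mod R$ is closed under finite direct sums, contains $R$, and $M_{\p_i}\in\add\X_{\p_i}$ for each $i$, then there is an exact sequence $0\to L\to M\oplus N\to X'\to0$ in $\mod R$ with $X'\in\X$, $\nf(L)\subseteq\nf(M)$, and $\nf(L)\cap\{\p_1,\dots,\p_s\}=\emptyset$. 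For $M\in\syz^{(r+1)d}(\mod R)$ one has $\nf(M)\subseteq\sing R$, so $\nf(L)\subseteq\{\m\}$; then $L\cong0$ in $\d(R)$ by Lemma~\ref{6}(3), and hence $M\in\add_{\d(R)}G$ with $G=\syz^{rd}Y$. The same device, applied with $\X=\add(R\oplus\syz^dG)$ and using the local stabilization from Lemma~\ref{13}, shows $G[1]\in\add_{\d(R)}G$. Thus the required ``splitting up to something locally free on the punctured spectrum'' takes place entirely in $\d(R)$, never in $\mod R$; that is precisely where your plan and the paper diverge.
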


\begin{proof}
If $R$ is an isolated singularity, Proposition \ref{11} implies $\d(R)=0=\add0$.
Assume $R$ is not so.
Apply Lemma \ref{10} to write $\sing R=\{\p_1,\dots,\p_n,\m\}$ with $n\ge1$.
By assumption, $\cm(R_{\p_i})$ has an additive generator $Z_i$ for every $1\le i\le n$.
Choose an $R$-module $Y_i$ such that $Z_i=(Y_i)_{\p_i}$.
Put $Y=Y_1\oplus\cdots\oplus Y_n$.
We see that
\begin{equation}\label{24}
\syz^d(\mod R_{\p_i})\subseteq\cm(R_{\p_i})=\add Z_i=\add(Y_i)_{\p_i}\subseteq\add Y_{\p_i}\quad\text{for each }1\le i\le n.
\end{equation}
By virtue of Lemma \ref{13}, there exists an integer $r\ge1$ such that the following statement holds.
\begin{equation}\label{25}
\add(R_{\p_i}\oplus\syz^{rd}(Y_{\p_i}))=\add(R_{\p_i}\oplus\syz^{(r+1)d}(Y_{\p_i}))\quad\text{for each }1\le i\le n.
\end{equation}
Put $G=\syz^{rd}Y$ and $\Phi=\{\p_1,\dots,\p_n\}$.
The set $\Phi$ is not empty but finite.
We proceed step by step.

(i) Let $M\in\syz^{(r+1)d}(\mod R)$.
Set $\X=\add_{\mod R}(R\oplus G)$.
Using \eqref{24}, for every $1\le i\le n$ we have
$$
M_{\p_i}\in\syz^{(r+1)d}(\mod R_{\p_i})=\syz^{rd}\syz^d(\mod R_{\p_i})\subseteq\syz^{rd}(\add Y_{\p_i})\subseteq\add(R_{\p_i}\oplus G_{\p_i})\subseteq\add_{\mod R_{\p_i}}(\X_{\p_i}).
$$
The subcategory $\X$ of $\mod R$ is closed under finite direct sums and contains $R$.
Applying \cite[Lemma 3.7(2)]{dlr}, we get an exact sequence $0\to L\to M\oplus N\to X\to0$ of $R$-modules such that $X\in\X$, $\nf(L)\subseteq\nf(M)$ and $\nf(L)\cap\Phi=\emptyset$.
As $(r+1)d\ge d$, we have $\nf(M)\subseteq\sing R=\Phi\cup\{\m\}$.
Hence $\nf(L)$ is contained in $\{\m\}$, and $L\cong0$ in $\d(R)$ by Lemma \ref{6}(3).
The induced isomorphism $M\oplus N\cong X$ in $\d(R)$ shows $M\in\add_{\d(R)}G$.

(ii) Put $\X=\add_{\mod R}(R\oplus\syz^dG)$.
It is observed from \eqref{25} that $G_{\p_i}\in\add_{\mod R_{\p_i}}(\X_{\p_i})$ for all $1\le i\le n$.
Similarly as in (i), we get an exact sequence $0\to L\to G\oplus N\to X\to0$ with $X\in\X$, $\nf(L)\subseteq\nf(G)$ and $\nf(L)\cap\Phi=\emptyset$.
As $rd\ge d$, we have $\nf(G)\subseteq\sing R=\Phi\cup\{\m\}$.
Thus $\nf(L)\subseteq\{\m\}$ and $L\cong0$ in $\d(R)$.
The isomorphism $G\oplus N\cong X$ in $\d(R)$ says $G\in\add_{\d(R)}(\syz^dG)=\add_{\d(R)}G[-d]$ by Lemma \ref{6}(2).
We have $G[d]\in\add_{\d(R)}G$, from which we see that $G[id]\in\add_{\d(R)}G$ for all $i\ge0$.
As $R$ is not an isolated singularity, we have $d\ge1$.
The module $\syz^{(r+1)d+(d-1)}G$ is in $\syz^{(r+1)d}(\mod R)$, and it belongs to $\add_{\d(R)}G$ by (i).
Hence $G[1]=(\syz^{(r+1)d+(d-1)}G)[(r+2)d]\in\add_{\d(R)}G[(r+2)d]\subseteq\add_{\d(R)}G$.
Thus, $G[i]$ is in $\add_{\d(R)}G$ for all $i\ge0$.

(iii) Let $M\in\syz^{(r+1)d}(\mod R)$ and $i\ge0$.
Lemma \ref{6}(2) shows $M[-i]\cong\syz^iM$ in $\d(R)$.
As $\syz^iM$ belongs to $\syz^{(r+1)d}(\mod R)$ as well, it is in $\add_{\d(R)}G$ by (i).
Hence $M[-i]\in\add_{\d(R)}G$.
Since $M$ is itself in $\add_{\d(R)}G$ by (i) again, $M[i]$ belongs to $\add_{\d(R)}G[i]$, while this additive closure is contained in $\add_{\d(R)}G$ by (ii).
We conclude that $M[j]$ is in $\add_{\d(R)}G$ for every integer $j$ and every $R$-module $M$ that belongs to $\syz^{(r+1)d}(\mod R)$.

(iv) Let $X\in\db(R)$.
By Lemma \ref{6}(1)(2) we have $X\cong(\syz^{(r+1)d}M)[n]$ in $\d(R)$ for some $M\in\mod R$ and $n\in\Z$.
As $(\syz^{(r+1)d}M)[n]\in\add_{\d(R)}G$ by (iii), we get $X\in\add_{\d(R)}G$.
We have shown $\d(R)=\add_{\d(R)}G$.
\end{proof}

\begin{rem}
If $R$ is a Cohen--Macaulay local ring which is locally Gorenstein on the punctured spectrum, then Theorem \ref{4} can be proved in the following approach.
As is shown below, the following statement holds.  
\begin{equation}\label{27}
\text{For every $M\in\cm(R)$ and every $n\in\Z$ there exists $N\in\cm(R)$ such that $M[n]\cong N$ in $\d(R)$.}
\end{equation}
Let $X\in\d(R)$.
Then $X\cong(\syz^dM)[n]$ in $\d(R)$ for some $M\in\mod R$ and $n\in\Z$ by Lemma \ref{6}(1)(2).
As $\syz^dM$ is in $\cm(R)$, by \eqref{27} we find $N\in\cm(R)$ with $(\syz^dM)[n+(r+1)d]\cong N$.
Hence $X\cong N[-(r+1)d]\cong\syz^{(r+1)d}N$ in $\d(R)$ and $\syz^{(r+1)d}N$ belongs to $\add_{\d(R)}G$ by (i) in the proof of Theorem \ref{4}.
Thus $\d(R)=\add G$ holds.

Let us show \eqref{27}.
Fix $m\ge0$.
Lemma \ref{6}(2) says that $M[-m]\cong\syz^mM$ in $\d(R)$, and $\syz^mM$ is a maximal Cohen--Macaulay $R$-module.
Therefore, it suffices to prove that $M[m]\cong N$ in $\d(R)$ for some $N\in\cm(R)$.

If $R$ admits a canonical module $\omega$, we can do this simply as follows.
There is an exact sequence $0\to M\to E^0\to\cdots\to E^{m-1}\to N\to0$ of maximal Cohen--Macaulay $R$-modules with $E^i\in\add\omega$ for any $0\le i\le m-1$.
As $R$ is locally Gorenstein on the punctured spectrum, $\omega$ is locally free on the punctured spectrum.
Lemma \ref{6}(3) implies that $E^i\cong0$ in $\d(R)$ for every $0\le i\le m-1$.
It is easy to observe that $M[m]\cong N$ in $\d(R)$.

Now we consider the general case.
As $R$ is locally Gorenstein on the punctured spectrum, by \cite[Proposition 12.8]{LW} there is an exact sequence $0\to M\to F_{d-1}\to\cdots\to F_0\to C\to0$ in $\mod R$ with each $F_i$ free.
Lemma \ref{6}(2) implies $M\cong C[-d]$ in $\d(R)$.
Let $L=\Gamma_\m(C)$ be the $\m$-torsion submodule of $C$, and set $K=C/L$.
Since $L$ has finite length as an $R$-module, we have that $L\cong0$ in $\d(R)$ by Lemma \ref{6}(3), and hence $C\cong K$ in $\d(R)$.
Since the $R$-module $K$ has positive depth, the depth lemma implies that $H:=\syz^{d-1}K$ is in $\cm(R)$ (note that $d\ge1$ as $R$ is not an isolated singularity).
It holds that $M[1]\cong C[1-d]\cong K[1-d]\cong H$ in $\d(R)$.
Applying this argument to $H$ and repeating it, we eventually find $N\in\cm(R)$ such that $M[m]\cong N$ in $\d(R)$. 
\end{rem}

Finally, we consider the necessity of the assumption of Theorem \ref{7}.
We need a lemma.

\begin{lem}\label{5}
\begin{enumerate}[\rm(1)]
\item
An artinian local ring $R$ has finite CM type if and only if $R$ is a hypersurface.
\item
Let $(R,\m,k)$ be a local ring with $\m^2=0$.
For each $R$-module $M$ the first syzygy $\syz M$ is a $k$-vector space.
\end{enumerate}
\end{lem}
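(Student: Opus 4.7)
The plan is to prove (2) by a direct argument from a minimal free resolution, and then use it --- together with a reduction to the case $\m^2 = 0$ --- to establish (1).

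For (2), I would take a minimal free resolution $\cdots \to F_1 \xrightarrow{\partial} F_0 \to M \to 0$. Minimality forces $\partial(F_1) \subseteq \m F_0$, hence $\syz M = \partial(F_1) \subseteq \m F_0$. Multiplying by $\m$ yields $\m \cdot \syz M \subseteq \m^2 F_0 = 0$, so $\m$ annihilates $\syz M$, making it an $R/\m = k$-module, i.e., a $k$-vector space.

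For (1), I would first note that an artinian local ring has dimension zero, so every finitely generated module is maximal Cohen--Macaulay; hence finite CM type coincides with finite representation type. The ``if'' direction is classical: an artinian hypersurface $R \cong S/(f)$ must have $S$ of dimension at most one, so $\m_R$ is principal (or $R = k$); writing $\m = (x)$ with $x$ nilpotent, the structure theorem for finitely generated modules over a principal-ideal artinian local ring gives that every $R$-module decomposes as a direct sum of modules $R/(x^i)$, yielding only finitely many isomorphism classes of indecomposables. For the ``only if'' direction, I would argue contrapositively: if $e := \dim_k \m/\m^2 \ge 2$, then $R$ has infinite representation type. The first step is to reduce to $\m^2 = 0$ by passing to $\bar R = R/\m^2$, which has the same embedding dimension, and every indecomposable $\bar R$-module is automatically indecomposable as an $R$-module. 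Once $\m^2 = 0$, part (2) identifies syzygies with $k$-vector spaces, so classification of $R$-modules up to free summands reduces to linear algebra data involving $e$ matrices. An infinite family of pairwise non-isomorphic indecomposables can then be exhibited: when $k$ is infinite, the cyclic modules $R/(x_1 + \lambda x_2)$ for $\lambda \in k$ (a direct computation with hom-spaces shows pairwise non-isomorphism), and when $k$ is finite, a Kronecker-quiver-style family such as cokernels of matrices $x_1 I_n + x_2 J_n$ with $J_n$ a nilpotent Jordan block of growing size $n$.

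The main obstacle is the ``only if'' direction of (1): verifying indecomposability and pairwise non-isomorphism across both finite and infinite residue fields. A cleaner shortcut would be to invoke the standard result (as in Leuschke--Wiegand) that an artinian local ring has finite CM type precisely when it is a principal ideal ring, which for artinian local rings is the same as being a hypersurface.
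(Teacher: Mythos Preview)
Your proof of (2) is essentially identical to the paper's: both take a minimal free resolution, use minimality to get $\syz M\subseteq\m F_0$, and conclude $\m\cdot\syz M\subseteq\m^2F_0=0$.

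For (1) the paper does not argue at all: it simply cites \cite[Theorem~3.3]{LW}, which is exactly the shortcut you mention in your final sentence. Your route is therefore more explicit and self-contained. The ``if'' direction via the structure theorem over an artinian principal ideal ring is correct and standard. For the ``only if'' direction, your reduction to $\overline R=R/\m^2$ is fine (an $R$-isomorphism between $\overline R$-modules is automatically an $\overline R$-isomorphism, so non-isomorphic indecomposable $\overline R$-modules remain non-isomorphic over $R$), and your infinite-field family $R/(x_1+\lambda x_2)$ is essentially the same construction the paper itself uses later in Lemma~\ref{15}(2), where the annihilator computation is carried out in detail. The finite-field case via Kronecker-type matrices $x_1I_n+x_2J_n$ is the classical argument, but verifying indecomposability and pairwise non-isomorphism rigorously takes a bit of work; since you already acknowledge the Leuschke--Wiegand citation as a cleaner alternative, that is the safer choice there. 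In summary: your argument is correct, more detailed than the paper's one-line citation, and its infinite-field portion in fact anticipates the proof of Lemma~\ref{15}(2).
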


\begin{proof}
(1) The assertion immediately follows from \cite[Theorem 3.3]{LW}.

(2) Let $F=(\cdots\to F_1\to F_0\to0)$ be a minimal free resolution of $M$.
Then the inclusion map $\syz M\hookrightarrow F_0$ factors through $\m F_0$.
Since $\m(\m F_0)=\m^2F_0=0$, we have $\m\syz M=0$.
Therefore, $\syz M$ is a $k$-vector space.
\end{proof}

The following example shows that Theorem \ref{7}, which is viewed as the converse of Theorem \ref{4}, does not necessarily hold true without the assumption that $R$ is locally Gorenstein on the punctured spectrum.

\begin{ex}\label{18}
Let $k$ be a field, and consider $R=k[\![x,y,z]\!]/(x^2,xy,y^2)$.
Then $R$ is a $1$-dimensional Cohen--Macaulay complete (hence, J-1) local ring.
We have $\spec R=\{\p,\m\}$, where $\p=(x,y)$ and $\m=(x,y,z)$.
The localization $R_\p=k[\![x,y,z]\!]_{(x,y)}/(x^2,xy,y^2)$ is an artinian non-Gorenstein local ring.
It is observed from Lemma \ref{5}(1) that $R_\p$ does not have finite CM type.
Therefore, the Cohen--Macaulay local ring $R$ is neither locally Gorenstein on the punctured spectrum, nor locally of finite CM type on the punctured spectrum.

We claim that $\d(R)=\add R/\p$ holds.
In fact, we fix any object $X\in\d(R)$.
It follows from (1) and (2) of Lemma \ref{6} that $X\cong M[n]\cong(\syz_RM)[n+1]$ in $\d(R)$ for some $M\in\mod R$ and $n\in\Z$.
Lemma \ref{5}(2) gives an isomorphism $\syz_{R_\p}(M_\p)\cong\kappa(\p)^{\oplus a}$ with $a\ge0$.
Hence $(\syz_RM)_\p\cong R_\p^{\oplus b}\oplus\kappa(\p)^{\oplus a}\cong(R^{\oplus b}\oplus(R/\p)^{\oplus a})_\p$ for some $b\ge0$.
There is an exact sequence $0\to K\to\syz_RM\to R^{\oplus b}\oplus(R/\p)^{\oplus a}\to C\to0$ in $\mod R$ with $K_\p=0=C_\p$.
Hence $K$ and $C$ are locally free on the punctured spectrum of $R$.
Lemma \ref{6}(3) shows $K\cong0\cong C$ in $\d(R)$, which implies that $\syz_RM\cong R^{\oplus b}\oplus(R/\p)^{\oplus a}\cong(R/\p)^{\oplus a}$ in $\d(R)$.
We get an isomorphism $X\cong(R/\p)^{\oplus a}[n+1]$ in $\d(R)$.
It is easy to observe that $\p=(x,y)=(x)\oplus(y)\cong R/(0:x)\oplus R/(0:y)=(R/\p)^{\oplus2}$ in $\mod R$.
There are isomorphisms $R/\p\cong\p[1]\cong(R/\p)[1]^{\oplus2}$ in $\d(R)$.
It is seen that $\add_{\d(R)}R/\p=\add_{\d(R)}((R/\p)[i])$ for all integers $i$.
We obtain $X\cong(R/\p)^{\oplus a}[n+1]\in\add_{\d(R)}((R/\p)[n+1])=\add_{\d(R)}R/\p$.
The claim now follows.
\end{ex}

\section{Several consequences and applications}

In this section, we give several consequences and applications of Theorem \ref{4}.
First of all, we state another sufficient condition for $\d(R)$ to have an additive generator, as a corollary of the proof of Theorem \ref{4}.

\begin{cor}\label{26}
Let $R$ be a J-1 local ring.
Let $G$ be an $R$-module.
Suppose that for each $\m\ne\p\in\sing R$ the local ring $R_\p$ is Cohen--Macaulay and $\cm(R_\p)=\add G_\p$.
Then one has the equality $\d(R)=\add G$.
\end{cor}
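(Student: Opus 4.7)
The plan is to adapt the proof of Theorem \ref{4}, using the given $G$ directly in place of the constructed generator $\syz^{rd}Y$ there. First, Lemma \ref{8} together with the hypothesis $\cm(R_\p)=\add G_\p$ shows that $R_\p$ has finite CM type for every $\m\ne\p\in\sing R$; combined with regularity at regular primes this verifies the hypothesis of Theorem \ref{4}, so Lemma \ref{10} gives $\sing R=\{\p_1,\ldots,\p_n,\m\}$. If $n=0$, Proposition \ref{11} gives $\d(R)=0=\add G$; assume $n\ge 1$, so $d\ge 1$. Since $\syz^d(\mod R_{\p_i})\subseteq\cm(R_{\p_i})=\add G_{\p_i}$, applying Lemma \ref{13} to $G$ at each $R_{\p_i}$ produces an integer $r\ge 1$ with
\[
\add(R_{\p_i}\oplus\syz^{rd}G_{\p_i})=\add(R_{\p_i}\oplus\syz^{(r+1)d}G_{\p_i})\quad\text{for every }i.
\]
Set $H=\syz^{rd}G$.

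The proof then follows four steps paralleling those of the proof of Theorem \ref{4}. \emph{(i)} Every $M\in\syz^d(\mod R)$ belongs to $\add_{\d(R)}G$: apply \cite[Lemma 3.7(2)]{dlr} to $M$ with $\X=\add(R\oplus G)$, using that locally $M_{\p_i}\in\cm(R_{\p_i})=\add G_{\p_i}$ and that $\nf(M)\subseteq\sing R$; the kernel of the resulting sequence has nonfree locus contained in $\{\m\}$ and hence vanishes in $\d(R)$ by Lemma \ref{6}(3). \emph{(ii)} $G[d]\in\add_{\d(R)}G$: apply the same lemma to $H$ with $\X=\add(R\oplus\syz^d H)$, using the displayed equality (which says $H_{\p_i}\in\add(R_{\p_i}\oplus\syz^d H_{\p_i})$) and $\nf(H)\subseteq\sing R$ (since $rd\ge d$). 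This produces $H\in\add_{\d(R)}\syz^d H=\add_{\d(R)}H[-d]$ by Lemma \ref{6}(2), so $H[d]\in\add_{\d(R)}H$; substituting $H\cong G[-rd]$ from Lemma \ref{6}(2) and shifting by $rd$ yields $G[d]\in\add_{\d(R)}G$. \emph{(iii)} $G[i]\in\add_{\d(R)}G$ for all $i\ge 0$: induction from (ii) gives $G[id]\in\add_{\d(R)}G$, and the identity $G[1]\cong(\syz^{(r+2)d-1}G)[(r+2)d]$ in $\d(R)$ combined with (i) applied to $\syz^{(r+2)d-1}G\in\syz^d(\mod R)$ yields $G[1]\in\add_{\d(R)}G$. \emph{(iv)} Any $X\in\db(R)$ is isomorphic in $\d(R)$ to $(\syz^k M)[n+k]$ for suitable $M\in\mod R$, $n\in\Z$, and $k\ge\max(d,-n)$ by Lemma \ref{6}(1)(2); by (i) and (iii) it then lies in $\add_{\d(R)}G$.

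The main obstacle is step (ii). The hypothesis $\cm(R_{\p_i})=\add G_{\p_i}$ does not directly imply the stability $\add(R_{\p_i}\oplus G_{\p_i})=\add(R_{\p_i}\oplus\syz^d G_{\p_i})$ needed to reach the key $\d(R)$-identification between $G$ and $\syz^d G$---such an equality can fail at non-Gorenstein localizations even when finite CM type holds. The remedy is to pass to the high syzygy $H=\syz^{rd}G$, where Lemma \ref{13} does supply the needed $\syz^d$-stability, and then to transport the resulting $\d(R)$-relation back to $G$ via the isomorphism $H\cong G[-rd]$.
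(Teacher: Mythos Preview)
Your proposal is correct and follows essentially the same route as the paper: both plug $G$ in for the module $Y$ of Theorem \ref{4}'s proof (the hypothesis $\cm(R_{\p_i})=\add G_{\p_i}$ supplies \eqref{24} directly), pass to the high syzygy $H=\syz^{rd}G$ to obtain the $\syz^d$-stability needed in step (ii), and then transfer back to $G$ via $H\cong G[-rd]$ in $\d(R)$. The paper compresses this into two lines---``the proof of Theorem \ref{4} gives $\d(R)=\add(\syz^mG)$ for some $m\ge d$, hence $\d(R)=\add G[-m]=\add G$''---whereas you re-run steps (i)--(iv) explicitly with the pleasant streamlining that step (i) applies to all of $\syz^d(\mod R)$ rather than $\syz^{(r+1)d}(\mod R)$, since $M_{\p_i}$ lands in $\add G_{\p_i}$ immediately without the extra $\syz^{rd}$.
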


\begin{proof}
If $R$ is an isolated singularity, then $\d(R)=0$ by Proposition \ref{11}, and $\d(R)=\add_{\d(R)}G$.
Assume that $R$ is not an isolated singularity.
The local ring $R$ is locally a Cohen--Macaulay ring of finite CM type on the punctured spectrum by Lemma \ref{8}.
We can write $\sing R=\{\p_1,\dots,\p_n,\m\}$ with $n\ge1$ by Lemma \ref{10}.
We have $\syz^d(\mod R_{\p_i})\subseteq\cm(R_{\p_i})=\add G_{\p_i}$ for all integers $1\le i\le n$.
The proof of Theorem \ref{4} gives rise to an integer $m\ge d$ such that $\d(R)=\add(\syz^mG)$.
Using Lemma \ref{6}(2), we get $\d(R)=\add G[-m]=\add G$.
\end{proof}

Next we recall the notions of a hypersurface and a simple singularity.

\begin{dfn}
\begin{enumerate}[(1)]
\item
A (local) {\em hypersurface} is by definition a local ring $(R,\m)$ such that there exist a regular local ring $(S,\n)$ and an element $f\in\n$ such that the $\m$-adic completion $\widehat R$ of $R$ is isomorphic to $S/(f)$.
\item
Let $(R,\m,k)$ be a $d$-dimensional local hypersurface with $\ch k=0$.
Denote by $\overline k$ the algebraic closure of $k$.
We say that $R$ is a {\em simple singularity} if the complete tensor product $\widehat R\widehat\otimes_k\overline k$ is isomorphic to either the formal power series ring $\overline k[\![x_1,\dots,x_d]\!]$ or the complete local hypersurface $\overline k[\![x_0,\dots,x_d]\!]/(f)$ where
$$
f=\begin{cases}
x_0^{n+1}+x_1^2+x_2^2+\cdots+x_d^2&(\A_n)\ (n\in\Z_{\ge1}), \text{ or}\\
x_0^{n-1}+x_0x_1^2+x_2^2+\cdots+x_d^2&(\D_n)\ (n\in\Z_{\ge4}),\text{ or}\\
x_0^4+x_1^3+x_2^2+\cdots+x_d^2&(\E_6),\text{ or}\\
x_0^3x_1+x_1^3+x_2^2+\cdots+x_d^2&(\E_7),\text{ or}\\
x_0^5+x_1^3+x_2^2+\cdots+x_d^2&(\E_8).
\end{cases}
$$
We say that $R$ is a {\em $T$-singularity} when $R$ is a simple singularity whose corresponding polynomial $f$ has type $T\in\{(\A_a),(\D_b),(\E_c)\,|\,a\in\Z_{\ge1},\,b\in\Z_{\ge4},\,c=6,7,8\}$.
\end{enumerate}
\end{dfn}

\begin{rem}
\begin{enumerate}[(1)]
\item
Let $R$ be an artinian local ring.
Then $R$ is a hypersurface if and only if $R$ has embedding dimension at most one, if and only if $R$ is isomorphic to $S/(x^e)$ for some discrete valuation ring $(S,xS)$ and $e>0$.
This follows from Cohen's structure theorem (note that $R$ is complete as it is artinian).
\item
Suppose that $\ch k=0$.
Then $\ch R=0$ and $R$ contains $k$ as a coefficient field.
Furthermore, for each prime ideal $\p$ of $R$, one has $\ch R_\p=\ch\kappa(\p)=0$.
Hence $R_\p$ contains $\kappa(\p)$ as a coefficient field.
\item
If $R$ is a $0$-dimensional simple singularity, $\widehat R\widehat\otimes_k\overline k$ is isomorphic to either $\overline k$ or $\overline k[\![x_0]\!]/(x_0^{n+1})$ for some $n\ge1$.
\item
The original definition \cite[Definition 9.1]{LW} of a simple singularity is different from ours.
Actually, a regular local ring is not a simple singularity in the original sense.
Even in the case of a singular local ring, by \cite[Theorems 9.2, 9.8]{LW} and Lemma \ref{23} stated below, a simple singularity in our sense is a simple singularity in the original sense, and both are the same when the local ring is complete and the residue field is algebraically closed and of characteristic zero.
We should also refer the reader to \cite[Corollary 10.19]{LW}.
\end{enumerate}
\end{rem}

Being a simple singularity is a sufficient condition for being of finite CM type.

\begin{lem}\label{23}
Every simple singularity (in our sense) has finite CM type.
\end{lem}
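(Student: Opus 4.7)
The plan is to reduce to the classical classification of ADE hypersurfaces over an algebraically closed field of characteristic zero and then descend the finite CM type property along the faithfully flat maps built into the definition of a simple singularity.

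First, if $R$ is a regular simple singularity, then every maximal Cohen--Macaulay $R$-module is free, so $\cm(R)=\add R$ and Lemma \ref{8} yields finite CM type. Hence I may assume that $R$ is a singular simple singularity, and set $S:=\widehat{R}\widehat\otimes_k\overline k$. By definition, $S$ is a complete local hypersurface of one of the ADE types over the algebraically closed field $\overline k$ of characteristic zero. The celebrated classification results of Buchweitz--Greuel--Schreyer in dimension at least two, together with their one-dimensional counterparts (Greuel--Kn\"orrer, Dieterich, Yoshino), as assembled in \cite[Theorems 9.2 and 9.8]{LW}, assert that every such $S$ has finite CM type.

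The second and principal step is to descend finite CM type from $S$ back to $R$. The structure homomorphism $R\to S$ factors as $R\to\widehat R\to\widehat R\widehat\otimes_k\overline k=S$, with both arrows faithfully flat local homomorphisms. The first has regular closed fiber; the fibers of the second are localizations of $\overline k\otimes_k k\cong\overline k$, hence regular, and because $\ch k=0$ the extension $\overline k/k$ is separable, so these fibers are in fact geometrically regular. Applying the standard descent of finite CM type along faithfully flat local extensions with geometrically regular fibers (see \cite[Chapter 10]{LW}, in particular \cite[Corollary 10.19]{LW}) then produces finite CM type for $\widehat R$, and hence for $R$.

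I expect the main obstacle to be this descent step: one must control the Krull--Schmidt behaviour of maximal Cohen--Macaulay modules under flat local extensions, ensuring that the finitely many indecomposable MCM $S$-modules account for all isomorphism classes of indecomposable MCM $R$-modules after suitable faithfully flat extension. This bookkeeping is precisely what the cited portions of \cite{LW} are designed to handle, so invoking them finishes the proof.
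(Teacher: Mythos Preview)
Your proof is correct and follows the same strategy as the paper: show that $S=\widehat R\widehat\otimes_k\overline k$ has finite CM type via the ADE classification, then descend along the flat local map back to $R$. The paper streamlines the descent by applying \cite[Theorem 10.1]{LW} directly to the single composite $R\to S$, noting only that the closed fiber $S/\m S=\overline k$ is a field; your factorization through $\widehat R$ and discussion of geometrically regular fibers are unnecessary (and your description of the non-closed fibers of $\widehat R\to S$ as localizations of $\overline k$ is not quite accurate, though this is harmless since only the closed fiber is needed for descent).
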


\begin{proof}
Let $R$ be a simple singularity.
The ring $S:=\widehat R\widehat\otimes_k\overline k$ is isomorphic to $\overline k[\![x_1,\dots,x_d]\!]$ or $\overline k[\![x_0,\dots,x_d]\!]/(f)$, where $f$ has type $T\in\{(\A_a),(\D_b),(\E_c)\,|\,a\in\Z_{\ge1},\,b\in\Z_{\ge4},\,c=6,7,8\}$.
By \cite[Theorem 9.8]{LW} and Lemma \ref{5}(1), the ring $S$ has finite CM type.
Note that the natural map $R\to S$ is a flat local homomorphism (see \cite[Theorem 49, (1)$\Leftrightarrow$(5)]{M}) and $S/\m S=\overline k$.
It follows from \cite[Theorem 10.1]{LW} that $R$ has finite CM type.
\end{proof}

Applying Theorem \ref{4} together with Lemma \ref{23}, we get a sufficient condition for the triangulated category $\d(R)$ to possess an additive generator, in terms of simple singularities.

\begin{cor}\label{19}
Let $R$ be a J-1 local ring, and suppose that $R$ is locally a simple singularity on the punctured spectrum.
Then the triangulated category $\d(R)$ possesses an additive generator.
\end{cor}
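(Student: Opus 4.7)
The plan is to derive this corollary as a direct application of Theorem \ref{4}. Since Theorem \ref{4} requires the hypothesis that $R$ be locally a Cohen--Macaulay ring of finite CM type on the punctured spectrum, the only thing to verify is that ``locally a simple singularity on the punctured spectrum'' implies this stronger-looking condition.

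First I would fix an arbitrary nonmaximal prime ideal $\p$ of $R$. By hypothesis, the localization $R_\p$ is a simple singularity. By the very definition of a simple singularity, $R_\p$ is a local hypersurface, i.e., its completion has the form $S/(f)$ for a regular local ring $S$ and some $f$ in the maximal ideal of $S$. Since a regular local ring is Cohen--Macaulay and $f$ is either zero or a nonzerodivisor in the domain $S$, the quotient $S/(f)$ is Cohen--Macaulay; completion preserves depth and dimension, so $R_\p$ itself is Cohen--Macaulay. Next, Lemma \ref{23} asserts that every simple singularity has finite CM type, so in particular $R_\p$ has finite CM type.

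Since $\p$ was arbitrary, we conclude that $R$ is locally a Cohen--Macaulay ring of finite CM type on the punctured spectrum. As $R$ is assumed to be J-1, Theorem \ref{4} applies directly and yields an additive generator of $\d(R)$.

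There is no real obstacle here: the work of reducing simple singularities to finite CM type has already been done in Lemma \ref{23} (which invoked the classical ADE classification via \cite[Theorem 9.8]{LW} together with a flat-descent argument through $\widehat R\widehat\otimes_k\overline k$), and the Cohen--Macaulay property is built into the hypersurface definition. The only conceptual point worth flagging is simply that the hypotheses of Theorem \ref{4} are pointwise conditions on $R_\p$ for $\p\ne\m$, which is exactly the form in which our assumption is given, so no uniformity argument across the punctured spectrum is needed.
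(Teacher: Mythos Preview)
Your proof is correct and follows exactly the paper's approach: invoke Lemma~\ref{23} to obtain finite CM type of each $R_\p$, note that a simple singularity is a hypersurface and hence Cohen--Macaulay, and then apply Theorem~\ref{4}. The paper states this corollary without a separate written proof, so your argument is simply a slightly more detailed spelling-out of the same deduction.
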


Here is an application example of the above corollary.

\begin{ex}\label{22}
Let $S=\CC[\![x,y,z]\!]$ and $R=S/(xyz)$.
As $R$ is complete, it is J-1.
For any nonmaximal prime ideal $\p$ the local ring $R_\p$ is either a regular local ring or an $(\A_1)$-singularity.
In fact, as $\p\ne\m$, we may assume $z\notin\p$, and then $R_\p\cong S_P/(xy)$, where $P$ is a preimage of $\p$ in $S$.
Hence $R$ is locally a simple singularity on the punctured spectrum, and Corollary \ref{19} guarantees the existence of objects $G$ such that $\d(R)=\add G$.
\end{ex}

\begin{rem}
Let $R$ be as in Example \ref{22}.
We can directly prove that $\d(R)=\add(R/(x)\oplus R/(y)\oplus R/(z))$, as follows.
The Jacobian ideal of $R$ is $J=(yz,xz,xy)$, so that $\sing R=\V(J)=\{\p,\q,\r,\m\}$, where $\p=(x,y)$, $\q=(x,z)$, $\r=(y,z)$ and $\m=(x,y,z)$.
Put $H=R\oplus R/(x)\oplus R/(y)\oplus R/(z)$; this is a maximal Cohen--Macaulay $R$-module.
We have $R_\p\cong A_P$, where $A=\CC[\![x,y,z]\!]/(xy)$ and $P=(x,y)$.
Let $M\in\cm(A_P)$.
Then $M=N_P$ for some $N\in\mod A$.
Since $A$ is Gorenstein, there is an exact sequence $0\to Y\to X\to N\to0$ of $A$-modules such that $X$ is maximal Cohen--Macaulay and $Y$ has finite injective dimension; see \cite[Theorem 11.17]{LW}.
Since $N_P=M$ is a maximal Cohen--Macaulay $A_P$-module, the localized exact sequence $0\to Y_P\to X_P\to N_P\to0$ splits, and $M$ is a direct summand of $X_P$.
By \cite[Proposition 14.17]{LW} (see also \cite[Proposition 2.2]{hsccm}), all the nonisomorphic indecomposable maximal Cohen--Macaulay $A$-modules are $A$, $A/(x)$, $A/(y)$, and the cokernels of the matrices $\left(\begin{smallmatrix}y&z^n\\0&x\end{smallmatrix}\right)$ and $\left(\begin{smallmatrix}x&-z^n\\0&y\end{smallmatrix}\right)$, where $n>0$.
Since $xy=0$ and $z^n$ is a unit in $A_P$, there are equivalences
$$
\left(\begin{smallmatrix}y&z^n\\0&x\end{smallmatrix}\right)
\cong\left(\begin{smallmatrix}(z^n)^{-1}y&1\\0&x\end{smallmatrix}\right)
\cong\left(\begin{smallmatrix}y&1\\0&x\end{smallmatrix}\right)
\cong\left(\begin{smallmatrix}y&1\\-xy&0\end{smallmatrix}\right)
=\left(\begin{smallmatrix}y&1\\0&0\end{smallmatrix}\right)
\cong\left(\begin{smallmatrix}0&1\\0&0\end{smallmatrix}\right)
\cong\left(\begin{smallmatrix}1&0\\0&0\end{smallmatrix}\right)
$$
of matrices over $A_P$, and similarly, $\left(\begin{smallmatrix}x&-z^n\\0&y\end{smallmatrix}\right)\cong\left(\begin{smallmatrix}1&0\\0&0\end{smallmatrix}\right)$. 
The cokernel of the matrix $\left(\begin{smallmatrix}1&0\\0&0\end{smallmatrix}\right)$ over $A_P$ is isomorphic to $A_P$.
Thus $X_P$ belongs to $\add\{A_P,(A/(x))_P,(A/(y))_P\}$.
We now see that $\cm(R_\p)=\add H_\p$.
By symmetry, we get $\cm(R_\q)=\add H_\q$ and $\cm(R_\r)=\add H_\r$.
It follows from Corollary \ref{26} that $\d(R)=\add H$ holds.
\end{rem}

From here to the end of this section, we focus on the case of a local ring of dimension one.
In this case, the punctured spectrum coincides with the set of minimal prime ideals.
Thus, as an immediate consequence of Lemmas \ref{16}, \ref{5}(1) and Theorem \ref{4}, we get the following corollary.

\begin{cor}\label{14'}
Let $R$ be a $1$-dimensional local ring which is locally a hypersurface on the punctured spectrum (e.g., a $1$-dimensional local hypersurface).
Then the triangulated category $\d(R)$ has an additive generator.
\end{cor}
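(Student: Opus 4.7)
The plan is to derive the corollary as a direct application of Theorem \ref{4}, by verifying its two hypotheses: that $R$ is J-1, and that $R$ is locally a Cohen--Macaulay ring of finite CM type on the punctured spectrum.

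First I would dispense with the J-1 hypothesis, which is immediate from Lemma \ref{16}: any local ring of dimension at most one is J-1, so in particular our one-dimensional $R$ is J-1 with no further work.

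Next, fix a nonmaximal prime ideal $\p$ of $R$. Since $\dim R=1$, the prime $\p$ must be a minimal prime, so the localization $R_\p$ has Krull dimension zero, i.e., is artinian. Every artinian local ring is Cohen--Macaulay. By hypothesis, $R_\p$ is a hypersurface, so it is an artinian hypersurface, and then Lemma \ref{5}(1) yields that $R_\p$ has finite CM type. Thus $R$ is locally a Cohen--Macaulay ring of finite CM type on the punctured spectrum, and Theorem \ref{4} supplies an additive generator of $\d(R)$.

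For the parenthetical ``e.g.'' assertion, one needs to see that a $1$-dimensional local hypersurface is in particular locally a hypersurface on the punctured spectrum: if $\widehat R\cong S/(f)$ with $(S,\n)$ regular local and $f\in\n$, and $\p\in\spec R\setminus\{\m\}$, then a standard completion/localization argument shows that $R_\p$ is again a hypersurface. This is routine and I would merely cite it as a known consequence of the definition.

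There is no serious obstacle here; the corollary is an assembly of three previously established facts (Lemmas \ref{16} and \ref{5}(1), together with Theorem \ref{4}), and the dimension-one hypothesis collapses ``locally a hypersurface'' to ``locally an artinian hypersurface'', which is precisely the case where Lemma \ref{5}(1) converts the hypersurface condition into finite CM type.
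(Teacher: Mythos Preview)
Your proof is correct and follows exactly the same route as the paper: the paper states the corollary as an immediate consequence of Lemmas \ref{16}, \ref{5}(1), and Theorem \ref{4}, and you have spelled out precisely how those three results combine (J-1 from Lemma \ref{16}, artinian hypersurface implies finite CM type from Lemma \ref{5}(1), then Theorem \ref{4}).
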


For an artinian local ring $(R,\m)$ we denote by $\ell\ell(R)$ the {\em Loewy length} of $R$, i.e., $\ell\ell(R)=\min\{n\,|\,\m^n=0\}$.
Here are a couple of remarks concerning the above corollary.

\begin{rem}\label{21}
\begin{enumerate}[(1)]
\item
In the situation of Corollary \ref{14'}, we can actually take $G=\bigoplus_{\p\in\Min R,\,0<i<\ell\ell(R_\p)}R/\p^i$ as an additive generator of $\d(R)$.
Indeed, fix any $\p\in\Min R$.
The artinian local ring $R_\p$ is a hypersurface, so that it is a quotient of a discrete valuation ring (even when $R_\p$ is a field).
Using the structure theorem for finitely generated modules over a principal ideal domain, we obtain equalities $\cm(R_\p)=\mod R_\p=\add\{R_\p,R_\p/\p R_\p,\dots,R_\p/\p^{\ell\ell(R_\p)-1}R_\p\}=\add(R\oplus G)_\p$.
Corollary \ref{26} yields $\d(R)=\add(R\oplus G)=\add G$.
\item
Let $G$ be as in (1) and consider $H=R\oplus G$.
Then, for every $R$-module $M$ there exists an exact sequence $0\to K\to M\to N\to L\to0$ of $R$-modules such that $K$ has finite length, $L$ is locally free on the punctured spectrum, and $N$ is in $\add_{\mod R}H$.
In fact, write $\Min R=\{\p_1,\dots,\p_n\}$, and fix an integer $1\le i\le n$.
Put $l_i=\ell\ell(R_{\p_i})\ge1$.
As $R_{\p_i}$ is an artinian hypersurface, similarly as in (1), the structure theorem for modules over a PID gives an $R_{\p_i}$-isomorphism $M_{\p_i}\cong R_{\p_i}^{\oplus e_{i,0}}\oplus(R_{\p_i}/\p_iR_{\p_i})^{\oplus e_{i,1}}\oplus\cdots\oplus(R_{\p_i}/\p_i^{l_i-1}R_{\p_i})^{\oplus e_{i,l_i-1}}$; this is valid even when $l_i=1$.
Put $N_i=R^{\oplus e_{i,0}}\oplus(R/\p_i)^{\oplus e_{i,1}}\oplus\cdots\oplus(R/{\p_i}^{l_i-1})^{\oplus e_{i,l_i-1}}$.
We have $M_{\p_i}\cong(N_i)_{\p_i}$.
There exists an $R$-homomorphism $f_i:M\to N_i$ such that $(f_i)_{\p_i}$ is an isomorphism.
We obtain an exact sequence $0\to K\to M\xrightarrow{f}N\to L\to0$, where $K=\bigcap_{i=1}^n\ker f_i$, $N=\bigoplus_{i=1}^nN_i$ and $f$ is the transpose of $(f_1,\dots,f_n)$.
The module $N$ belongs to $\add_{\mod R}H$.
Since $(\ker f_i)_{\p_i}=0$, we have $K_{\p_i}=0$ for each $i$.
An exact sequence $0\to M_{\p_i}\xrightarrow{f_{\p_i}}N_{\p_i}\to L_{\p_i}\to0$ is induced.
Since $(0,\dots,0,(f_i)_{\p_i}^{-1},0,\dots,0)$ is a left inverse to $f_{\p_i}$, this exact sequence splits.
We see that $L_{\p_i}\cong\bigoplus_{j\ne i}(N_j)_{\p_i}=\bigoplus_{j\ne i} R_{\p_i}^{\oplus e_{j,0}}$ for each $i$.
We are done.
\end{enumerate}
\end{rem}

We relate the above corollary to the notion of finite CM$_+$ type which has been introduced in \cite{plus}.

\begin{dfn}
\begin{enumerate}[(1)]
\item
A local ring $R$ is said to have {\em finite CM$_+$ type} if it has only finitely many nonisomorphic indecomposable maximal Cohen--Macaulay modules that are not locally free on the punctured spectrum.
\item
Let $R$ be a Gorenstein local ring.
We denote by $\lcm_0(R)$ the subcategory of $\lcm(R)$ consisting of maximal Cohen--Macaulay $R$-modules $M$ such that $M_\p\cong0$ in $\lcm(R_\p)$ for all $\m\ne\p\in\spec R$.
This is a thick subcategory of the triangulated category $\lcm(R)$, so we can define the Verdier quotient $\lcm(R)/\lcm_0(R)$.
\end{enumerate}
\end{dfn}

The following corollary is immediately deduced from Corollary \ref{14'} and \cite[Proposition 4.4]{dlr}.

\begin{cor}\label{20}
Let $R$ be a Gorenstein local ring of dimension one.
Suppose that $R$ is locally a hypersurface on the punctured spectrum.
Then the triangulated category $\lcm(R)/\lcm_0(R)$ has an additive generator.
\end{cor}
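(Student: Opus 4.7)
The plan is to reduce the statement to Corollary \ref{14'} via a triangle equivalence of Verdier quotients. First I would invoke Corollary \ref{14'}: since $R$ is one-dimensional and locally a hypersurface on the punctured spectrum, the triangulated category $\d(R)=\db(R)/\thick(R\oplus k)$ admits an additive generator $G$.

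Next I would bring in Buchweitz's equivalence. Because $R$ is Gorenstein, there is a triangle equivalence $\lcm(R)\cong\ds(R)$. Under localization, for every nonmaximal prime $\p$ the ring $R_\p$ is again Gorenstein (in fact a hypersurface by hypothesis), so the condition ``$M_\p\cong 0$ in $\lcm(R_\p)$'' on a maximal Cohen--Macaulay module $M$ is equivalent, via Buchweitz, to $M_\p$ being perfect in $\db(R_\p)$, i.e., to $M$ having finite projective dimension locally on the punctured spectrum. By Lemma \ref{6}(3), this is precisely the condition $M\cong 0$ in $\d(R)$. Thus the composition $\cm(R)\hookrightarrow\db(R)\to\d(R)$ sends exactly the objects of $\lcm_0(R)$ to zero.

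Now I would appeal to \cite[Proposition 4.4]{dlr}, which (in the Gorenstein setting) packages this identification into a triangle equivalence
\[
\lcm(R)/\lcm_0(R)\;\xrightarrow{\;\sim\;}\;\d(R).
\]
Since additive generators are preserved by equivalences of additive categories, a preimage of $G$ under this equivalence is an additive generator of $\lcm(R)/\lcm_0(R)$, which completes the proof.

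I do not expect any genuine obstacle here: the work is entirely in Corollary \ref{14'} (whose proof is Theorem \ref{4}) and in the already-cited Proposition 4.4 of \cite{dlr}. The only point requiring mild care is checking that the equivalence $\lcm(R)/\lcm_0(R)\cong\d(R)$ is applicable in our setting; this is immediate from the Gorenstein hypothesis on $R$, so the argument is essentially a citation.
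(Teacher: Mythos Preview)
Your proposal is correct and follows exactly the paper's approach: the paper's proof is the one-line observation that the corollary is immediately deduced from Corollary \ref{14'} and \cite[Proposition 4.4]{dlr}. Your middle paragraph unpacking the Buchweitz equivalence is extra exposition rather than a different argument.
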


\begin{rem}
If a Gorenstein local ring $R$ has finite CM$_+$ type, then it is evident that $\lcm(R)/\lcm_0(R)$ admits an additive generator.
The converse does not necessarily hold.
Indeed, let $R$ be a homomorphic image of a regular local ring.
Suppose that $R$ is a Gorenstein non-reduced ring of dimension one.
It is shown in \cite[Theorem 1.6]{plus} that $R$ has finite CM$_+$ type if and only if there are a regular local ring $S$ and a regular system of parameters $x,y$ of $R$ such that $R$ is isomorphic to either $S/(x^2)$ or $S/(x^2y)$.
Thus, by Corollary \ref{20}, the existence of an additive generator of $\lcm(R)/\lcm_0(R)$ does not necessarily imply that $R$ has finite CM$_+$ type.
\end{rem}

Here is a concrete example to explain the last part of the above remark.

\begin{ex}
Let $k$ be a field.
Consider the $1$-dimensional local hypersurface $R=k[\![x,y]\!]/(x^2y^2)$.
Then $R$ does not have finite CM$_+$ type but the category $\lcm(R)/\lcm_0(R)$ possesses an additive generator.
\end{ex}

Finally, we consider local rings of countable CM type.

\begin{dfn}
We say that a local ring $R$ has {\em countable CM type} provided that the set $\ind\cm(R)$ is at most countable.
Note by definition that finite CM type implies countable CM type.
\end{dfn}

In order to give a proof of the final result of this paper, we establish a lemma.

\begin{lem}\label{15}
\begin{enumerate}[\rm(1)]
\item
Let $R$ be a local ring with residue field $k$.
If $k$ is uncountable, so is $\kappa(\p)$ for any $\p\in\spec R$.
\item
Let $R$ be an artinian local ring whose residue field is an uncountable set.
Then $R$ has countable CM type if and only if $R$ has finite CM type, if and only if $R$ is a hypersurface.
\end{enumerate}
\end{lem}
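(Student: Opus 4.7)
For (1), since $\p\subseteq\m$, the surjection $R\twoheadrightarrow R/\m=k$ factors through $R/\p$, so $|R/\p|\ge|k|$ and $R/\p$ is uncountable; as $R/\p$ is a domain that embeds in its fraction field $\kappa(\p)$, the field $\kappa(\p)$ is uncountable as well.

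For (2), Lemma~\ref{5}(1) already gives the equivalence ``hypersurface $\Leftrightarrow$ finite CM type,'' and finite CM type trivially implies countable CM type. The remaining implication, that countable CM type forces $R$ to be a hypersurface, I shall prove in the contrapositive: if $R$ is not a hypersurface, then there are uncountably many isomorphism classes of indecomposable $R$-modules. By the embedding-dimension characterization of artinian hypersurfaces recalled earlier in the paper, $R$ failing to be a hypersurface means $\dim_k(\m/\m^2)\ge 2$. Pick $x,y\in\m$ whose residues in $\m/\m^2$ are $k$-linearly independent. For each $\alpha\in k$, set $I_\alpha=(x+\alpha y)R+\m^2$ and $M_\alpha=R/I_\alpha$.

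The central claim is that $\{M_\alpha\}_{\alpha\in k}$ is an uncountable family of pairwise nonisomorphic indecomposable $R$-modules. Indecomposability of $M_\alpha$ follows from $I_\alpha\subseteq\m$: the quotient ring $R/I_\alpha$ is then local, so $\mathrm{End}_R(M_\alpha)\cong R/I_\alpha$ has only trivial idempotents. Pairwise nonisomorphism follows from a computation of annihilators: $\mathrm{ann}_R(M_\alpha)=I_\alpha$, and for $\alpha\ne\beta$ the linear independence of $\bar x,\bar y$ in $\m/\m^2$ forces $I_\alpha\ne I_\beta$, because modulo $\m^2$ these ideals reduce to the distinct one-dimensional $k$-subspaces spanned by $\bar x+\alpha\bar y$ and $\bar x+\beta\bar y$. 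Since $|k|$ is uncountable, we obtain uncountably many nonisomorphic indecomposable $R$-modules, contradicting countable CM type.

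The main obstacle is identifying a construction simple enough to analyze yet flexible enough to produce a $k$-indexed family. Once one takes $M_\alpha=R/I_\alpha$ with $I_\alpha$ reducing modulo $\m^2$ to a $k$-line in $\m/\m^2$, both indecomposability (from locality of $R/I_\alpha$) and pairwise nonisomorphism (from distinct annihilators) fall out cleanly.
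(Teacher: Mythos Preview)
Your proof is correct and follows essentially the same strategy as the paper's: part~(1) is identical, and for part~(2) both arguments reduce to showing that embedding dimension at least two yields an uncountable family of nonisomorphic cyclic (hence indecomposable) modules indexed by $k$, built from two elements $x,y\in\m$ independent modulo $\m^2$. The only difference is cosmetic---the paper uses the principal ideals $(x+uy)$ for units $u\in R\setminus\m$ and compares them via $\m/\m^2$, whereas you use the ideals $(x+\alpha y)R+\m^2$, which makes the annihilator comparison reduce immediately to linear algebra in $\m/\m^2$.
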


\begin{proof}
(1) The surjection $R/\p\twoheadrightarrow R/\m=k$ shows that if $k$ is uncountable, then so is $R/\p$.
The inclusion map $R/\p\hookrightarrow\kappa(\p)$ of the domain $R/\p$ into its quotient field $\kappa(\p)$ shows that if $R/\p$ is uncountable, then so is $\kappa(\p)$.

(2) Taking Lemma \ref{5}(1) into account, it suffices to verify that $\ind\cm(R)$ is uncountable if $R$ has embedding dimension at least two.
Choose elements $x,y\in\m$ whose residue classes in $\m/\m^2$ are linearly independent over $k$.
We have $\cm(R)=\mod R$ as $R$ is artinian.
Hence any cyclic $R$-module is an indecomposable maximal Cohen--Macaulay $R$-module.
Consider the subset $\Delta=\{[R/(x+uy)]\,|\,u\in R\setminus\m\}$ of $\ind\cm(R)$.
Let $u,v$ be elements in $R\setminus\m$ such that the equality $[R/(x+uy)]=[R/(x+vy)]$ holds.
Taking the annihilators of these two cyclic $R$-modules, we obtain the equality $(x+uy)=(x+vy)$ of ideals of $R$.
We have $x+uy=a(x+vy)$ for some $a\in R$, and $x(1-a)+y(u-av)=0$ in $R$.
The choice of $x,y$ forces $\overline{1-a}=\overline{u-av}=\overline0$ in $k$.
Hence $\overline1=\overline a$ and $\overline u=\overline a\,\overline v=\overline v$ in $k$.
Therefore, the map $\pi:\Delta\to k$ given by $\pi([R/(x+uy)])=\overline u$ is well-defined.
It is evident that $\pi$ is surjective.
Since the residue field $k$ is an uncountable set, so is $\Delta$, and so is $\ind\cm(R)$.
\end{proof}

Applying Theorem \ref{4}, we obtain another sufficient condition for $\d(R)$ to have an additive generator.

\begin{cor}\label{14}
Let $R$ be a $1$-dimensional Cohen--Macaulay local ring with uncountable residue field and a canonical module.
If $R$ has countable CM type, then the triangulated category $\d(R)$ has an additive generator.
\end{cor}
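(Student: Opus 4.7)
The plan is to apply Theorem \ref{4}. Since $\dim R = 1$, Lemma \ref{16} tells us that $R$ is J-1, and every non-maximal prime $\p$ of $R$ is a minimal prime, so $R_\p$ is artinian and in particular Cohen--Macaulay. The only remaining hypothesis of Theorem \ref{4} to verify is that $R_\p$ has finite CM type for each $\p\in\spec R\setminus\{\m\}$. Fix such a $\p$. By Lemma \ref{15}(1) the residue field $\kappa(\p)$ is uncountable, so Lemma \ref{15}(2) reduces the task to showing that the artinian local ring $R_\p$ has countable CM type.

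The key step is a lifting assertion: every indecomposable maximal Cohen--Macaulay $R_\p$-module $N$ is isomorphic to a direct summand of $M_\p$ for some indecomposable maximal Cohen--Macaulay $R$-module $M$. To prove it, I would start from a finite presentation of $N$ over $R_\p$, clear denominators to obtain a presentation matrix over $R$, and let $M_0$ be its cokernel, so that $(M_0)_\p\cong N$. Setting $M:=M_0/\Gamma_\m(M_0)$, the quotient by the $\m$-torsion, one has $M_\p\cong N$ (because $\m\not\subseteq\p$ forces $\Gamma_\m(M_0)_\p=0$) and $\depth M\ge 1=\dim R$, so $M$ is a nonzero maximal Cohen--Macaulay $R$-module. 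Decomposing $M\cong M_1\oplus\cdots\oplus M_s$ into indecomposables via Krull--Schmidt and localizing yields $N\cong\bigoplus(M_i)_\p$; indecomposability of $N$ then forces $(M_j)_\p\cong N$ for some $j$.

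With the lifting statement in hand, countable CM type of $R$ transfers to countable CM type of $R_\p$: the at most countably many indecomposable maximal Cohen--Macaulay $R$-modules contribute, via localization and Krull--Schmidt over the artinian ring $R_\p$, at most countably many isomorphism classes of indecomposable $R_\p$-modules. Lemma \ref{15}(2) then promotes countable CM type of $R_\p$ to finite CM type, and Theorem \ref{4} delivers the desired additive generator of $\d(R)$.

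The main obstacle is the lifting step, where one must simultaneously control the localization at $\p$ (to recover $N$) and the global depth (to land in $\cm(R)$). My argument above does not make explicit use of the canonical module; presumably the canonical module hypothesis permits a more conceptual route via the duality $\Hom_R(-,\omega)$ on maximal Cohen--Macaulay modules, but for the plan just sketched it appears redundant.
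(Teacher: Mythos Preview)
Your argument is correct and takes a different, more hands-on route than the paper. The paper's proof simply invokes \cite[Theorem~14.5]{LW}, which asserts that for a Cohen--Macaulay local ring with a canonical module countable CM type descends to every localization; the proof of that theorem goes through maximal Cohen--Macaulay approximations, and this is where the canonical module enters. Your lifting argument sidesteps that machinery entirely: in dimension one the quotient $M_0/\Gamma_\m(M_0)$ automatically has depth at least one, hence is maximal Cohen--Macaulay, and it still localizes to $N$, so every indecomposable $R_\p$-module is the localization of some indecomposable object of $\cm(R)$. This yields the inequality $\#\ind\cm(R_\p)\le\#\ind\cm(R)$ directly, without the uniqueness part of Krull--Schmidt over $R$ and without the canonical module. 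Your suspicion that the canonical module hypothesis is redundant is therefore justified; in fact the Cohen--Macaulay hypothesis on $R$ is not used in your argument either. The paper's route is shorter on the page but imports a heavier external result carrying genuine extra hypotheses, whereas your elementary approach exploits the special feature of dimension one that killing $\m$-torsion already lands in $\cm(R)$.
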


\begin{proof}
Fix a nonmaximal prime ideal $\p$ of $R$.
As $R$ is a Cohen--Macaulay local ring with a canonical module, the localization $R_\p$ has countable CM type by \cite[Theorem 14.5]{LW}.
Since the residue field of $R$ is uncountable, Lemma \ref{15}(1) implies that so is $\kappa(\p)$.
Since $R_\p$ is an artinian local ring, Lemma \ref{15}(2) implies that $R_\p$ has finite CM type.
The assertion of the corollary is now a consequence of Lemma \ref{16} and Theorem \ref{4}.
\end{proof}



\begin{thebibliography}{99}
\bibitem{ddc}
{\sc T. Aihara; R. Takahashi}, Generators and dimensions of derived categories of modules, {\em Comm. Algebra} {\bf 43} (2015), no. 11, 5003--5029.
\bibitem{hsccm}
{\sc T. Araya; K.-i. Iima; R. Takahashi}, On the structure of Cohen--Macaulay modules over hypersurfaces of countable Cohen--Macaulay representation type, {\em J. Algebra} {\bf 361} (2012), 213--224.
\bibitem{B}
{\sc R.-O. Buchweitz}, Maximal Cohen--Macaulay modules and Tate cohomology, With appendices and an introduction by L. L. Avramov, B. Briggs, S. B. Iyengar and J. C. Letz, Math. Surveys Monogr. {\bf 262}, {\em American Mathematical Society, Providence, RI}, 2021.
\bibitem{BGS}
{\sc R.-O. Buchweitz; G.-M. Greuel; F.-O. Schreyer}, Cohen--Macaulay modules on hypersurface singularities II, {\em Invent. Math.} {\bf 88} (1987), no. 1, 165--182.
\bibitem{sing}
{\sc H. Dao; R. Takahashi}, Upper bounds for dimensions of singularity categories, {\em C. R. Math. Acad. Sci. Paris} {\bf 353} (2015), no. 4, 297--301.
\bibitem{plus}
{\sc T. Kobayashi; J. Lyle; R. Takahashi}, Maximal Cohen--Macaulay modules that are not locally free on the punctured spectrum, {\em J. Pure Appl. Algebra} {\bf 224} (2020), no. 7, 106311, 29 pp.
\bibitem{LW}
{\sc G. J. Leuschke; R. Wiegand}, Cohen--Macaulay representations, Math. Surveys Monogr. {\bf 181}, {\em American Mathematical Society, Providence, RI}, 2012.
\bibitem{M}
{\sc H. Matsumura}, Commutative algebra, Second edition, Mathematics Lecture Note Series {\bf 56}, {\em Benjamin/Cummings Publishing Co., Inc., Reading, Mass.}, 1980.
\bibitem{N}
{\sc A. Neeman}, Triangulated categories, Annals of Mathematics Studies {\bf 148}, {\em Princeton University Press, Princeton, NJ}, 2001.
\bibitem{kos}
{\sc R. Takahashi}, Reconstruction from Koszul homology and applications to module and derived categories, {\em Pacific J. Math.} {\bf 268} (2014), no.1, 231--248.
\bibitem{dlr}
{\sc R. Takahashi}, Dominant local rings and subcategory classification, {\em Int. Math. Res. Not. IMRN} (2023), no. 9, 7259--7318.
\end{thebibliography}
\end{document}